\newtheorem{theorem}{Theorem}
\newtheorem{corollary}{Corollary}
\newtheorem{lemma}{Lemma}
\theoremstyle{remark}
\theoremstyle{definition}
\newtheorem{define}{Definition}
\newtheorem*{Acknowlegement}{Acknowlegement}
\begin{document}

\title[On the CR automorphism group of a certain hypersurface ]{On the CR automorphism group of a certain hypersurface of infinite type in $\mathbb C^2$}

\author{Ninh Van Thu}

\thanks{ The research of the author was supported in part by an NRF grant 2011-0030044 (SRC-GAIA) of the Ministry of Education, The Republic of Korea.}

\address{Center for Geometry and its Applications,
 Pohang University of Science and Technology,  Pohang 790-784, The Republic of Korea}
\email{thunv@postech.ac.kr, thunv@vnu.edu.vn}

\subjclass[2000]{Primary 32M05; Secondary 32H02, 32H50, 32T25.}
\keywords{Holomorphic vector field, automorphism group, real hypersurface, infinite type point.}
\begin{abstract}
In this article, we consider $\mathcal{C}^\infty$-smooth real hypersurfaces of infinite type in $\mathbb C^2$. The purpose of this paper is to give explicit descriptions for stability groups of the hypersurface $M(a,\alpha,p,q)$ (see Sec. $1$) and a radially symmetric hypersurface in $\mathbb C^2$.
\end{abstract}
\maketitle

\section{Introduction}
Let $M$ be a $\mathcal{C}^\infty$-smooth real hypersurface in $\mathbb C^n$ and $p\in M$. We denote by $\mathrm{Aut}(M,p)$ the stabilty group of $M$, that is, those germs at $p$ of biholomorphisms mapping $M$ into itself and fixing $p$. We also denote by $\mathrm{hol}_0(M,p)$ the set of germs at $p$ of real-analytic infinitesimal CR automorphisms of $M$ vanishing at $p$, i.e., $X\in \mathrm{hol}_0(M,p)$ if and only if there exists a germ $Z$ at $p$ of a holomorphic vector field in $\mathbb C^2$ vanishing at $p$ such that $\mathrm{Re}~Z$ is tangent to $M$ and $X=\mathrm{Re}~Z\mid_M$. 

For a real hypersurface in $\mathbb C^n$, the stability group and the real-analytic infinitesimal CR automorphism are not easy to describe explicitly; besides, it is unknown in most cases. For instance, the study of $\mathrm{Aut}(M,p)$ of various hypersurfaces is given in \cite{Ezhov, Kolar1, Kolar2, Kolar3, Kolar4, Stanton1, Stanton2}. Recently, explicit forms of the stability group of models (see detailed definition in \cite{Kolar4}) have been obtained in \cite{Ezhov, Kolar3, Kolar4}. However, these results are known for Levi nondegenerate hypersurfaces or more generally for Levi degenerate hypersurfaces of finite type.

Throughout the article, we consider $\mathcal{C}^\infty$-smooth real hypersurfaces of infinite type in $\mathbb C^2$. We shall describe the stability groups of $M(a,\alpha,p,q)$ (defined below) and a radially summetric hypersurface in $\mathbb C^2$, which are showed in \cite{Ninh, By1} that they admit nonzero tangential holomorphic vector fields vanishing at infinite type points.

Let $a(z)=\sum_{n=1}^\infty a_n z^n$ be a nonzero holomorphic function defined on $\Delta_{\epsilon_0}:=\{z\in \mathbb C\colon |z|<\epsilon_0\}~(\epsilon_0>0)$ and let $p,q$ be $\mathcal{C}^\infty$-smooth functions defined respectively on $(0,\epsilon_0)$ and $[0,\epsilon_0)$ satisfying that $q(0)=0$ and that the function
\[
g(z)=
\begin{cases}
e^{p(|z|)}&~\text{if}~0<|z|<\epsilon_0\\
0&~\text{if}~z=0 
\end{cases}
\]
is $\mathcal{C}^\infty$-smooth and vanishes to infinite order at $z=0$.

Denote by $M(a,\alpha, p,q)$ the germ at $(0,0)$ of a real hypersurface defined by
$$
\rho(z_1,z_2):= \mathrm{Re}~z_1+P(z_2)+F(z_2,\mathrm{Im}~z_1) =0,
$$
where $F$ and $P$ are respectively defined on $\Delta_{\epsilon_0}\times (-\delta_0,\delta_0)$ ($\delta_0>0$ small enough) and $\Delta_{\epsilon_0}$ by
\[
 F(z_2,t)=\begin{cases}
 -\frac{1}{\alpha}\log \Big|\frac{\cos \big(R(z_2)+\alpha t\big)}{\cos (R(z_2))} \Big| &~\text{if}~ \alpha\ne 0\\
 \tan(R(z_2))t  &~\text{if}~ \alpha =0,
\end{cases} 
 \]
 where $R(z_2)=q(|z_2|)- \mathrm{Re}\big(\sum_{n=1}^\infty\frac{a_n}{n} z_2^n\big)$ for all $z_2\in \Delta_{\epsilon_0}$,
and
\begin{equation*}
\begin{split}
  P(z_2)=
 \begin{cases}
\frac{1}{\alpha} \log \Big[ 1+\alpha P_1(z_2)\Big]~&\text{if}~ \alpha \ne 0\\
 P_1(z_2) ~&\text{if}~ \alpha=0,
\end{cases}
\end{split}
\end{equation*}
where 
\begin{equation*}
\begin{split}
P_1(z_2)=\exp\Big[p(|z_2|)+\mathrm{Re}\Big(\sum_{n=1}^\infty  \frac{a_n}{in}z_2^n\Big ) -\log \big|\cos\big(R(z_2)\big)\big|   \Big]
\end{split}
\end{equation*}
for all $z_2\in \Delta_{\epsilon_0}^*$ and $P_1(0)=0$.

We can see that $P,F$ are $\mathcal{C}^\infty$-smooth in $\Delta_{\epsilon_0}$ and $P$ vanishes to infinite order at $0$, and hence $M(a,\alpha, p,q)$ is $\mathcal{C}^\infty$-smooth and is of infinite type in the sense of D'Angelo (cf. \cite{D}). 

In \cite{Ninh}, the author proved the following theorem.
\begin{theorem}[\cite{Ninh}]\label{T1} $\mathrm{hol}_0\big(M(a,\alpha, p,q),0\big)$ is generated by
$$
H^{a,\alpha} (z_1,z_2)=L^\alpha (z_1) a(z_2)\frac{\partial }{\partial z_1}+iz_2\frac{\partial }{\partial z_2},
$$
where 
\[
L^\alpha(z_1)=
\begin{cases}
\frac{1}{\alpha}\big(\exp(\alpha z_1)-1\big)&\text{if}~ \alpha \ne 0\\
z_1 &\text{if}~ \alpha =0.
\end{cases}
\]
\end{theorem}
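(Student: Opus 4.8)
The plan is to identify every germ at $0$ of a holomorphic vector field $Z=f(z_1,z_2)\frac{\partial}{\partial z_1}+g(z_1,z_2)\frac{\partial}{\partial z_2}$ with $f(0)=g(0)=0$ whose real part is tangent to $M:=M(a,\alpha,p,q)$, and then to show that the resulting $\RR$-vector space is exactly $\RR\cdot H^{a,\alpha}$. Writing $z_1=x_1+iy_1$, the defining function is $\rho=\mathrm{Re}\,z_1+P(z_2)+F(z_2,y_1)$, so the tangency condition $(\mathrm{Re}\,Z)\rho=0$ on $M$ reads $\mathrm{Re}\big(f\rho_{z_1}+g\rho_{z_2}\big)=0$ on $M$, where a direct computation gives $\rho_{z_1}=\frac12\big(1-i\,F_t\big)$ with $F_t=\tan\big(R(z_2)+\alpha y_1\big)$, and $\rho_{z_2}=P_{z_2}+F_{z_2}$ (Wirtinger derivatives). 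First I would parametrize $M$ near $0$ by $(z_2,y_1)$ via $x_1=-P(z_2)-F(z_2,y_1)$ and substitute, so that the tangency condition becomes a single identity in $(z_2,\bar z_2,y_1)$ valid on a neighborhood of $0$.

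Second, I would exploit the explicit structure of $\rho$. Setting $h(z_2)=\sum_{n\ge1}\frac{a_n}{n}z_2^n$, one has the clean relations $z_2h'(z_2)=a(z_2)$, $R=q(|z_2|)-\mathrm{Re}\,h$, and the exponent defining $P_1$ equals $p(|z_2|)+\mathrm{Im}\,h-\log|\cos R|$. Thus $\rho$ is assembled from the \emph{radial} data $p(|z_2|),q(|z_2|)$ and the \emph{pluriharmonic} data $h$. The radial functions are, by the prescribed infinite-order flatness, genuinely non-harmonic — they are not the restriction to $M$ of any holomorphic expression — so in the substituted identity the radial contribution cannot be cancelled by anything coming from the holomorphic $f,g$. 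Expanding $f,g$ in powers of $z_1$ with holomorphic coefficients in $z_2$ and comparing the identity slotwise, the isolated radial part must vanish on its own; this both forbids any $z_1$-dependence of $g$ and forces the $\frac{\partial}{\partial z_2}$-flow to preserve $|z_2|$, whence $g(z_1,z_2)=c\,iz_2$ for a constant $c\in\RR$. This is the step in which the infinite-type hypothesis is indispensable.

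Third, with $g=c\,iz_2$ the remaining content of the identity is that the $z_1$-motion generated by $f$ must compensate the variation of the pluriharmonic data under the rotation generated by $iz_2$. Since $\frac{d}{dt}\big|_{0}h(e^{it}z_2)=i\,z_2h'(z_2)=i\,a(z_2)$, this variation is governed by $a(z_2)$, which forces the $z_2$-dependence of $f$ to be exactly $a(z_2)$, i.e. $f(z_1,z_2)=\phi(z_1)\,a(z_2)$. Inserting this and using $F_t=\tan(R+\alpha y_1)$, the identity collapses to a first-order linear ODE for $\phi$, namely $\phi'(z_1)=\alpha\,\phi(z_1)+c$ with $\phi(0)=0$; its solution is $\phi=c\,L^\alpha$ (this is precisely the ODE satisfied by $L^\alpha$, since $(L^\alpha)'=\alpha L^\alpha+1$). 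Hence $Z=c\,H^{a,\alpha}$, and conversely one checks by the same computation that $H^{a,\alpha}$ is indeed tangent, which also fixes the normalization; therefore $\mathrm{hol}_0(M,0)=\RR\cdot\mathrm{Re}\,H^{a,\alpha}\mid_M$.

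Finally, the main obstacle is the absence of real-analyticity: because $M$ is only $\mathcal{C}^\infty$ and $p,q$ are flat at $0$, one cannot complexify the tangency identity or match Taylor coefficients in $(z_2,\bar z_2)$ in the naive way. The decisive device is to separate the infinitely-flat radial part of the substituted identity and argue that it must vanish independently of the pluriharmonic part; making this separation rigorous — controlling the flat error terms and justifying the slotwise comparison that yields $g=c\,iz_2$ — is the technical heart of the argument, after which the determination of $f$ is an essentially formal ODE computation.
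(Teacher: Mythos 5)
You should first note that this paper does not actually prove Theorem~\ref{T1}; it is imported from \cite{Ninh}, so the only in-paper material to measure your argument against is the analogous machinery of Sections 2--4 and the Appendix. Your computational scaffolding is correct: $\rho_{z_1}=\tfrac12(1-iF_t)$ with $F_t=\tan(R(z_2)+\alpha y_1)$, the relation $z_2h'(z_2)=a(z_2)$, the identity $(L^\alpha)'=\alpha L^\alpha+1$, and the consistency of the intermediate normal forms $g=c\,iz_2$ and $f=\phi(z_1)a(z_2)$ with the asserted generator. The tangency of $H^{a,\alpha}$ itself (which you dispatch with ``one checks by the same computation'') is also genuinely nontrivial --- it occupies the whole Appendix (Theorem~\ref{T3}) and rests on the identities (i)--(v) --- but the real problem is elsewhere.

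The genuine gap is at exactly the point you yourself flag as ``the technical heart'': the claim that the radial part of the substituted identity ``must vanish on its own,'' which is supposed to force $g=c\,iz_2$ and then $f=\phi(z_1)a(z_2)$, is asserted, not proved. In the $\mathcal{C}^\infty$ setting there is no ``slotwise comparison'' in $(z_2,\bar z_2,y_1)$: the identity holds only on a real hypersurface, the functions $p,q$ are flat at $0$, and ``non-harmonic'' is not by itself a usable separation principle. The substitute used throughout this circle of results is quantitative: Taylor-expand $f$ and $g$, restrict the tangency identity to well-chosen curves such as $y_1=\beta P(z_2)$ and rays $z_2=re^{i\theta}$ with $r\to0^+$, and eliminate coefficients one at a time using consequences of infinite-order flatness --- precisely the tools this paper isolates as Lemma~\ref{lemma2} (a flat positive $P$ admits no scaling quasi-invariance), Lemma~\ref{lemma4} (the expansion of $P(z+z\beta)-P(z)$ producing the unbounded factor $|z|p'(|z|)$), and the observation $\limsup_{r\to0^+}|rp'(r)|=+\infty$ in the proof of Lemma~\ref{lemma6}. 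Without carrying out such an elimination your argument does not rule out, for instance, $g$ containing higher pure powers of $z_2$, or $f$ containing cross terms that are not of the product form $\phi(z_1)a(z_2)$; consequently the reduction to the ODE $\phi'=\alpha\phi+c$ is not established, and the proof as written is an outline rather than a proof.
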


It is also shown in \cite{Ninh} that if $M$ is a $\mathcal{C}^\infty$-smooth hypersurface in $\mathbb C^2$ satisfying that $P$ is positive on a punctured disk, $P$ vanishes to infinite order at $0$, and $F(z_2,t)$ is real-analytic in a neighborhood of $(0,0)$ in $\mathbb C\times \mathbb R$, then $\mathrm{hol}_0(M,0)\ne 0$ if and only if, after a change of variable in $z_2$, $M=M(a,\alpha,p,q)$ for some $a,\alpha,p,q.$

We let $\phi^{a,\alpha}_t~(t\in\mathbb R)$ denote the holomorphic map defined on a neighborhood $U$ of the origin in $\mathbb C^2$ by setting
\begin{equation*}
\phi^{a,\alpha}_t(z_1,z_2)=
\begin{cases}
\Big(-\frac{1}{\alpha} \log\Big[1+(e^{-\alpha z_1}-1) \exp\big(\int_0^t a(z_2e^{i\tau})d\tau \big)\Big], z_2 e^{it}\Big)&~\text{if}~\alpha\ne 0\\
\Big(z_1 \exp\big(\int_0^t a(z_2e^{i\tau})d\tau \big), z_2 e^{it}\Big)&~\text{if}~ \alpha=0.
\end{cases}
\end{equation*}
By shrinking $U$ if necessary we can see that $\phi_t^{a,\alpha}~(t\in\mathbb R)$ is well-defined. In addition, each $\phi^{a,\alpha}_t$ preserves $M(a,\alpha,p,q)$ ( see cf. Theorem \ref{T3} in Appendix). Moreover, it is checked that $\{\phi^{a,\alpha}_t\}_{t\in\mathbb R}$ is a one-parameter subgroup of $\mathrm{Aut}\big(M(a,\alpha,p,q),0\big)$, which is generated by the holomorphic vector field $H^{a,\alpha}$. 

The first aim of this paper is to prove the following theorem.

\noindent
{\bf Theorem A.} {\it $\mathrm{Aut}\big(M(a,\alpha,p,q),0\big)=\{\phi^{a,\alpha}_t~|~ t\in \mathbb R\}$.}
 
For the case $M$ is \emph{radially symmetric}, J. Byun et al. \cite{By1} obtained the following theorem.
\begin{theorem}[\cite{By1}]\label{T2} Let $(M,0)$ be a real $\mathcal{C}^\infty$-smooth hypersurface germ at $0$ defined by the equation
$\rho(z) := \rho(z_1,z_2)=\mathrm{Re}~z_1+P(z_2)+ \mathrm{Im}~z_1 Q(z_2,\mathrm{Im}~z_1)=0$
satisfying the conditions: 
\begin{itemize}
\item[(i)] $P,Q$ are $\mathcal{C}^\infty$-smooth with $P(0)=Q(0,0)=0$,
\item[(ii)] $P(z_2)=P(|z_2|),~Q(z_2,t)=Q(|z_2|,t)$ for any $z_2$ and $t$,
\item[(iii)] $P(z_2)>0$ for any $z_2 \not= 0$, and 
\item[(iv)] $P(z_2)$ vanishes to infinite order at $z_2=0$.
\end{itemize} 
Then $\mathrm{hol}_0(M,0)=\{i\beta z_2\frac{\partial}{\partial z_2}\colon \beta\in \mathbb R\}$.
\end{theorem}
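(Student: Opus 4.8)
The plan is to start from the infinitesimal tangency condition and to play the rotational symmetry of $M$ against the infinite order of vanishing of $P$. Let $Z=f\,\partial_{z_1}+g\,\partial_{z_2}$ be a germ of holomorphic vector field with $f(0)=g(0)=0$ whose real part is tangent to $M$; tangency is equivalent to $\mathrm{Re}(Z\rho)=0$ on $M$. Writing $z_1=x_1+iy_1$ and using that $P,Q$ depend on $z_2$ only through $s:=|z_2|^2$, one computes $\rho_{z_1}=\tfrac12(1-i\Phi)$ with $\Phi:=Q+y_1Q_t$ real, and $\rho_{z_2}=\bar z_2\,h$ with $h$ real (a combination of the $s$-derivatives of $P$ and $Q$). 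Hence the tangency identity takes the form $\tfrac12\mathrm{Re}\,f+\tfrac{\Phi}{2}\mathrm{Im}\,f+h\,\mathrm{Re}(\bar z_2 g)=0$ on $M$, where one substitutes $x_1=-P-y_1Q$, leaving $(z_2,\bar z_2,y_1)$ as the free variables.

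Next I would linearize the angular dependence. Expanding $f=\sum_k f_k(z_1)z_2^k$ and $g=\sum_k g_k(z_1)z_2^k$ and writing $z_2=re^{i\theta}$, all of $\Phi,h,P,Q$ depend only on $(r^2,y_1)$, so the identity separates into its Fourier modes in $\theta$. The modes $e^{im\theta}$ with $m\ge 2$ decouple cleanly, each relating only $f_m$ and $g_{m+1}$; the mode $m=1$ couples $f_1$, $g_0$ and $g_2$ (the latter through the conjugate term $\overline{g_0}$); and the mode $m=0$ couples only $f_0$ and $g_1$. Thus the problem is reduced to these modal equations, and the goal becomes to show that every coefficient except $g_1$ vanishes, with $g_1$ forced to be a purely imaginary constant.

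The engine is the flatness of $P$. Two elementary facts drive everything: since $P$ and all its derivatives vanish at $0$, one has $sP'(s)\to0$ as $s\to0$; on the other hand $sP'(s)/P(s)$ is necessarily unbounded as $s\to0$, for otherwise $|(\log P)'|\le C/s$ would force $P(s)\gtrsim s^{C}$, contradicting infinite-order vanishing. In each modal equation the ``deeper'' terms carry a factor $h$ (hence $P'$) or positive powers of $s$, while the genuine jets of the holomorphic coefficients survive in the limit $s\to0$. This kills $f_m$ for $m\ge1$ and $g_k$ for $k\neq1$ on the (non-degenerate) arc through $0$ which is the image of the imaginary $z_1$-axis, hence identically by the identity theorem; the coupled mode $m=1$ is disentangled by expanding one order further in $s$ and invoking flatness again. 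For the surviving mode $m=0$, namely $\tfrac12\mathrm{Re}\,f_0(z_1)+\tfrac\Phi2\mathrm{Im}\,f_0(z_1)+sh\,\mathrm{Re}\,g_1(z_1)=0$ with $z_1=-P-y_1Q+iy_1$, I would first let $s\to0$ to get $\mathrm{Re}\,f_0=0$ on the imaginary axis, then use the unboundedness of $sP'/P$ to force $\mathrm{Re}\,g_1=0$ there, and finally divide by $P$ and let $s\to0$ (using $sP'\to0$) to recover the vanishing of the normal derivative of $\mathrm{Re}\,f_0$. Since $\mathrm{Re}\,f_0$ is harmonic with zero Cauchy data on a line it vanishes identically, whence $f_0\equiv0$; then $\mathrm{Re}\,g_1\equiv0$ on an open set yields $g_1\equiv i\beta$ with $\beta\in\RR$.

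The main obstacle is exactly this mode-$0$ disentanglement (and the analogous mode-$1$ coupling) in the presence of the $Q$-terms: because $P$ is only flat and not of finite type, no weight filtration or jet-truncation argument is available, and one must instead extract the Cauchy data of the harmonic functions $\mathrm{Re}\,f_0$ and $\mathrm{Re}\,g_1$ on the imaginary axis order by order, each step balancing a genuine jet against a flat quantity through the ratios $sP'$ and $sP'/P$. The reverse inclusion is immediate: radial symmetry makes each rotation $z_2\mapsto e^{it}z_2$ a CR automorphism of $M$ fixing $0$, so $iz_2\,\partial_{z_2}$ and its real multiples indeed lie in $\mathrm{hol}_0(M,0)$, giving $\mathrm{hol}_0(M,0)=\{i\beta z_2\,\partial_{z_2}\colon\beta\in\RR\}$.
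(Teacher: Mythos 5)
First, a point of reference: the paper does not prove this statement --- Theorem \ref{T2} is quoted from Byun--Joo--Song \cite{By1} --- so there is no internal proof to compare against. That said, the engine you isolate is exactly the one this paper uses for its own lemmas: the unboundedness of $sP'(s)/P(s)$ is the observation $\limsup_{r\to 0^+}|rp'(r)|=+\infty$ in the proof of Lemma \ref{lemma6}, and the rigidity it forces is Lemma \ref{lemma2}. Your Fourier decomposition in $\arg z_2$ is legitimate (all of $P,Q,\Phi,h$ depend only on $(|z_2|^2,\mathrm{Im}\,z_1)$), and the decoupled modes $m\ge 2$, as well as the reverse inclusion via the rotations $R_t$, are fine.

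The gap sits exactly where you place ``the main obstacle,'' and your proposed resolution does not work as stated. In the mode $m=0$ the equation is $\mathrm{Re}\big((1-i\Phi)f_0(z_1)\big)+2hs\,\mathrm{Re}\,g_1(z_1)=0$ with $z_1=-P-y_1Q+iy_1$ and $s=|z_2|^2$. Letting $s\to 0$ kills $hs$ but not $\Phi=Q+y_1Q_t$, which tends to $\Phi_0(y_1)=Q(0,y_1)+y_1Q_t(0,y_1)$; only $Q(0,0)=0$ is assumed, so $\Phi_0\not\equiv 0$ in general. What you actually obtain is $\mathrm{Re}\big((1-i\Phi_0(y_1))\,f_0(\gamma(y_1))\big)=0$ along the arc $\gamma(y_1)=-y_1Q(0,y_1)+iy_1$, not ``$\mathrm{Re}\,f_0=0$ on the imaginary axis,'' and your closing step --- a harmonic function with vanishing Cauchy data on a line --- does not apply to this twisted, non-harmonic quantity. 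The mode $m=1$ is worse: the coefficient multiplying $\overline{g_0}$ is $h=P'(s)+y_1Q_s(s,y_1)$, and while $sh\to 0$, $h$ itself tends to $y_1Q_s(0,y_1)$, which need not vanish; hence the coupling $(1-i\Phi_0)f_1+2y_1Q_s(0,y_1)\overline{g_0}=0$ survives the limit, and ``expanding one order further in $s$'' is an assertion, not an argument. Both modes can be closed --- for instance by Taylor-expanding $f_0,f_1,g_0,g_1$ in $z_1$ and killing coefficients inductively, starting on the slice $y_1=0$ where $z_1=-P(s)$ is real and $\Phi=Q(s,0)\to 0$, alternating the two flatness facts exactly as in the proof of Lemma \ref{lemma6} --- but that inductive bookkeeping is the substantive content of the theorem and is absent from your sketch.
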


We note that the condition $(iv)$ simply says that $0$ is a point of D'Angelo infinite type. Now let us denote by $\{R_t\}_{t\in \mathbb R}$ the one-parameter subgroup of $\mathrm{Aut}(M,0)$ generated by the holomorphic vector field $H_R(z_1,z_2)=iz_2\frac{\partial }{\partial z_2}$, that is, 
$$
R_t(z_1,z_2)=\big(z_1,z_2 e^{it}\big), ~\forall t\in\mathbb R.
$$

The second aim of this paper is to show the following theorem.

\noindent
{\bf Theorem B.} {\it Let $(M,0)$ be a real $\mathcal{C}^\infty$-smooth hypersurface germ at $0$ defined by the equation
$\rho(z) := \rho(z_1,z_2)=\mathrm{Re}~z_1+P(z_2)+ \mathrm{Im}~z_1 Q(z_2,\mathrm{Im}~z_1)=0$
satisfying the conditions: 
\begin{itemize}
\item[(i)] $P,Q$ are $\mathcal{C}^\infty$-smooth with $P(0)=Q(0,0)=0$,
\item[(ii)] $P(z_2)=P(|z_2|),~Q(z_2,t)=Q(|z_2|,t)$ for any $z_2$ and $t$,
\item[(iii)] $P(z_2)>0$ for any $z_2 \not= 0$, and 
\item[(iv)] $P(z_2)$ vanishes to infinite order at $z_2=0$.
\end{itemize} 
Then $\mathrm{Aut}(M,0)=\{R_t~|~t\in \mathbb R\}$.}

This paper is organized as follows. In Section $2$, we give several properties of functions vanishing to infinite order at the origin. In Section $3$, we prove Theorem A. Section $4$ is devoted to the proof of Theorem B. Finally, a theorem is pointed out in Appendix.

\begin{Acknowlegement} The author would like to thank Prof. Do Duc Thai for his precious discussions on this material.
\end{Acknowlegement}

\section{Preliminaries}
In this section, we will recall the definition of function vanishing to infinite order at the origin in the complex plane and we will introduce several lemmas used to prove Theorem A and Theorem B.

\begin{define} We say that a $\mathcal{C}^\infty$-smooth function $P: U(0)\to \mathbb R$ on a neighborhood $U(0)$ of the origin in $\mathbb R^n$ vanishes to infinite order at $0$ if 
$$
\frac{\partial^{\alpha_1+\cdots+\alpha_n}}{\partial x_1^{\alpha_1}\cdots \partial x_n^{\alpha_n} }P(0)=0   
$$
for every index $\alpha=(\alpha_1,\ldots\alpha_n)\in \mathbb N^n$.
\end{define}
\begin{lemma}\label{lemma1} Let $P: U(0)\to \mathbb R$ be a $\mathcal{C}^\infty$-smooth function on a neighborhood $U(0)$ of the origin in $\mathbb R^n$. Then $P$ vanishes to infinite order at $0$ if and only if 
$$
\lim_{(x_1,\ldots,x_n)\to (0,\ldots,0)}\frac{P(x_1,\ldots,x_n)}{|x_1|^{\alpha_1}\cdots |x_n|^{\alpha_n}}=0
$$
for any index $\alpha=(\alpha_1,\ldots,\alpha_n)\in \mathbb N^n$.
\end{lemma}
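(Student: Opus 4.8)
The plan is to establish the two implications separately, with Taylor's theorem with remainder as the engine in each direction; the content is that the vanishing of all derivatives at $0$ and the decay of $P$ measured against monomials are two faces of the same phenomenon.

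For the direction ``$\Rightarrow$'', assume $P$ vanishes to infinite order at $0$ and fix $\alpha=(\alpha_1,\dots,\alpha_n)$. I would expand $P$ by Taylor's formula to order $|\alpha|=\alpha_1+\cdots+\alpha_n$ about the origin. Since every partial derivative of order at most $|\alpha|$ vanishes at $0$, the Taylor polynomial is identically zero and only the remainder survives, so that $P(x)$ equals a finite sum of terms, each a product of a monomial $x^{\beta}$ with $|\beta|=|\alpha|$ and a factor $D^{\beta}P(\theta x)$ evaluated at an intermediate point $\theta x$, $\theta=\theta(x)\in(0,1)$. As $x\to 0$ the points $\theta x$ tend to $0$ and each $D^{\beta}P$ is continuous and vanishes at $0$, which already yields $P(x)=o(\|x\|^{|\alpha|})$ in the Euclidean norm. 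The remaining point is to pass from this isotropic estimate to the quotient by the mixed monomial $|x_1|^{\alpha_1}\cdots|x_n|^{\alpha_n}$ appearing in the statement.

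For the direction ``$\Leftarrow$'', assume the displayed limit is zero for every $\alpha$. Taking $\alpha=(0,\dots,0)$ gives $P(0)=0$ by continuity. Suppose, for contradiction, that $P$ does not vanish to infinite order, and let $d$ be the least integer for which some partial derivative of order $d$ is nonzero at $0$; then the degree-$d$ homogeneous part $H_d$ of the Taylor expansion is a nonzero homogeneous polynomial and Taylor's theorem gives $P(x)=H_d(x)+o(\|x\|^{d})$. I would then test the hypothesis with $\alpha=(d,0,\dots,0)$ along the rays $x=tv$, $t\to 0^{+}$, for $v$ in the open positive orthant: by homogeneity the quotient tends to $H_d(v)/v_1^{\,d}$, which by hypothesis must be $0$ for every such $v$. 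A nonzero polynomial cannot vanish on a nonempty open set, so $H_d\equiv 0$, contradicting the choice of $d$. Hence all derivatives vanish and $P$ is flat.

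The main obstacle is the last step of the forward direction. Flatness of $P$ at the single point $0$ controls $P$ only against powers of $\|x\|$, whereas the denominator $|x_1|^{\alpha_1}\cdots|x_n|^{\alpha_n}$ can be far smaller than $\|x\|^{|\alpha|}$ when the coordinates are of very different sizes. Factoring out an individual power $|x_i|^{\alpha_i}$ by iterating the one-variable Taylor expansion requires $P$ to be flat along the whole coordinate hyperplane $\{x_i=0\}$, not merely at $0$; this is automatic when $n=1$, where the quotient argument is immediate, and is harmless along any fixed ray, so I would organise the forward estimate around whichever reduction the applications actually use rather than attempt it in full anisotropic generality.
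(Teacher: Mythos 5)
Your backward implication is complete and correct: testing with $\alpha=(d,0,\dots,0)$ along rays in the open positive orthant annihilates the lowest nonvanishing homogeneous Taylor term, and a nonzero homogeneous polynomial cannot vanish on an open set. Your forward implication is also correct as far as it goes: the order-$|\alpha|$ Lagrange remainder gives $P(x)=o(\|x\|^{|\alpha|})$, since every $D^{\beta}P$ with $|\beta|=|\alpha|$ is continuous and vanishes at $0$. The paper's own proof is the single sentence ``follows easily from Taylor's theorem,'' so it supplies nothing beyond this either.

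The obstacle you flag at the end is not a defect of your argument but of the statement itself: for $n\ge 2$ the forward implication with the anisotropic denominator $|x_1|^{\alpha_1}\cdots|x_n|^{\alpha_n}$ is false. Take $n=2$ and $P(x,y)=e^{-1/x^2}$ for $x\ne 0$, $P(0,y)=0$; this is $\mathcal{C}^\infty$ and all of its partial derivatives vanish at the origin, yet for $\alpha=(0,1)$ the quotient $P(x,y)/|y|$ along the path $y=e^{-2/x^2}$, $x\to 0^{+}$, equals $e^{1/x^2}\to+\infty$. A radially symmetric example such as $P(z)=e^{-1/|z|}$ with $y=e^{-2/|x|}$ fails in the same way, so the symmetry hypotheses imposed later in the paper do not rescue the anisotropic form. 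Your instinct to prove only the reduction the applications use is exactly right: every invocation of the lemma in the paper --- the quotient $P(\alpha^{n}z_0)/(\alpha^{n}|z_0|)^{m_0}$ in Lemma~\ref{lemma2}, the estimates $P(z_2)=o(|z_2|^{j})$ in Lemmas~\ref{lemma5} and~\ref{lemma6} --- divides only by a power of the modulus $|z|=\|(x,y)\|$, and that isotropic version your Taylor-remainder argument fully establishes. The lemma should be read, or restated, with $\|x\|^{|\alpha|}$ in place of $|x_1|^{\alpha_1}\cdots|x_n|^{\alpha_n}$; with that correction your two implications together constitute a complete proof.
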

\begin{proof}
The proof follows easily from Taylor's theorem.
\end{proof}
\begin{corollary}\label{corollary1} If a $\mathcal{C}^\infty$-smooth function $P$ on a neighborhood of the origin in $\mathbb R^n$ vanishes to infinite order at $0$, then $\frac{\partial^{\alpha_1+\cdots+\alpha_n}}{\partial x_1^{\alpha_1}\cdots \partial x_n^{\alpha_n} }P(x_1,\ldots,x_n)$ does also for any index $\alpha=(\alpha_1,\ldots\alpha_n)\in \mathbb N^n$.  
\end{corollary}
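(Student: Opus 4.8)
The plan is to argue directly from the Definition rather than through the limit criterion of Lemma \ref{lemma1}. Write $D^\alpha := \frac{\partial^{\alpha_1+\cdots+\alpha_n}}{\partial x_1^{\alpha_1}\cdots \partial x_n^{\alpha_n}}$ for a multi-index $\alpha=(\alpha_1,\ldots,\alpha_n)\in\mathbb N^n$, and fix the index $\beta=(\beta_1,\ldots,\beta_n)$ appearing in the statement. Since $P$ is $\mathcal C^\infty$ on $U(0)$, the function $D^\beta P$ is again $\mathcal C^\infty$ on $U(0)$, so the notion of vanishing to infinite order at $0$ applies to it; by the Definition it then suffices to verify that \emph{every} partial derivative of $D^\beta P$ vanishes at the origin.

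To this end I would fix an arbitrary multi-index $\gamma=(\gamma_1,\ldots,\gamma_n)\in\mathbb N^n$. Because $P$ is $\mathcal C^\infty$, mixed partial derivatives commute (Clairaut's theorem), whence $D^\gamma\big(D^\beta P\big)=D^{\beta+\gamma}P$, where $\beta+\gamma=(\beta_1+\gamma_1,\ldots,\beta_n+\gamma_n)$. Evaluating at the origin and invoking the hypothesis that $P$ vanishes to infinite order at $0$, we obtain $D^\gamma\big(D^\beta P\big)(0)=D^{\beta+\gamma}P(0)=0$.

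Since $\gamma$ is arbitrary, all partial derivatives of $D^\beta P$ vanish at $0$, so by the Definition $D^\beta P$ vanishes to infinite order at $0$; as $\beta$ was arbitrary, this is exactly the assertion of the Corollary.

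The content of the argument is therefore essentially bookkeeping, and the one point that genuinely uses the $\mathcal C^\infty$ hypothesis is the identity $D^\gamma D^\beta=D^{\beta+\gamma}$, i.e.\ the interchangeability of the order of differentiation; no substantive obstacle arises beyond this. One could alternatively route the proof through Lemma \ref{lemma1}, but that would require estimating $D^\beta P$ in terms of $P$ near the origin, which is strictly less convenient, so the direct definitional route is the cleaner choice.
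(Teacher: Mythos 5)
Your argument is correct: since $D^\gamma(D^\beta P)=D^{\beta+\gamma}P$ for smooth $P$, every derivative of $D^\beta P$ vanishes at $0$, which is exactly the definition. The paper gives no explicit proof (the statement is presented as immediate), and your direct definitional route is precisely the intended one.
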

\begin{lemma}\label{lemma2} Suppose that $P\in \mathcal{C}^\infty(\Delta_{\epsilon_0})~(\epsilon_0>0)$ vanishes to infinite order at $0$, $P(z)>0$ for all $z\in \Delta_{\epsilon_0}^*$, and there are $\alpha>0$ and $\beta>0$ such that
$$
 \lim_{z\to 0}\frac{P(\alpha z)}{P(z)}=\beta.
$$ 
Then $\alpha=\beta=1$.
\end{lemma}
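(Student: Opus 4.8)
The plan is to restrict the hypothesis to the positive real axis and pass to logarithms, which converts the multiplicative assumption $P(\alpha z)/P(z)\to\beta$ into an additive statement about differences of $\log P$ at a fixed scale, and then to play that against the infinite-order vanishing. Concretely, set $c=\log\alpha$ and $L=\log\beta$, and define $h(t):=\log P(e^{-t})$ for $t$ large enough that $e^{-t}\in\Delta_{\epsilon_0}^*$; this is legitimate since $P>0$ off the origin. Because $e^{-(t-c)}=\alpha e^{-t}$, the hypothesis (restricted to $z=e^{-t}\to 0^+$) reads
\[
D(t):=h(t-c)-h(t)=\log\frac{P(\alpha e^{-t})}{P(e^{-t})}\longrightarrow L\qquad(t\to+\infty).
\]

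Next I would record what infinite-order vanishing buys us at the level of $h$. By Lemma \ref{lemma1}, for every $N$ we have $P(x)\le x^{N}$ for all sufficiently small $x>0$, hence $h(t)\le -Nt$ for $t$ large, and therefore $h(t)/t\to-\infty$ as $t\to+\infty$. So $\log P$ decays \emph{superlinearly}. I would then reduce to the case $\alpha>1$: substituting $w=\alpha z$ shows that the triple $(P,\alpha,\beta)$ satisfies the hypothesis if and only if $(P,1/\alpha,1/\beta)$ does, so if $\alpha\ne1$ we may assume $\alpha>1$, i.e. $c>0$. This sign is exactly what keeps every point used below inside the domain.

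Assume then $\alpha>1$. From $D(s)=h(s-c)-h(s)$ one gets $h(s+c)=h(s)-D(s+c)$, and iterating,
\[
h(t+Nc)=h(t)-\sum_{k=1}^{N}D(t+kc).
\]
Since $t+kc\to+\infty$, each summand tends to $L$, so by the Cesàro/Stolz averaging one has $\sum_{k=1}^{N}D(t+kc)=NL+o(N)$, whence
\[
\frac{h(t+Nc)}{t+Nc}=\frac{h(t)-NL+o(N)}{t+Nc}\longrightarrow -\frac{L}{c}\qquad(N\to\infty),
\]
a \emph{finite} limit. This contradicts $h(t)/t\to-\infty$ (applied along $t+Nc\to+\infty$), and the contradiction persists even when $L=0$. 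Hence $\alpha=1$; and with $\alpha=1$ the ratio $P(\alpha z)/P(z)$ is identically $1$, forcing $\beta=1$.

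The main obstacle, and the only genuinely delicate point, is organizing the telescoping so that all arguments stay near the origin: this is precisely why one must first reduce to $\alpha>1$, so that the iterated points $t+kc$ march off to $+\infty$ (i.e. toward $z=0$) rather than out of $\Delta_{\epsilon_0}$. The remaining technical step, deducing $\sum_{k=1}^N D(t+kc)=NL+o(N)$ from $D(s)\to L$, is a routine averaging argument. Conceptually, the proof is the clash between \textbf{bounded logarithmic differences at a fixed multiplicative scale}, which forces at most linear growth of $\log P$ in $t$, and \textbf{infinite-order vanishing}, which forces strictly superlinear decay.
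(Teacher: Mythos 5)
Your proof is correct, and it is essentially the paper's argument transported to logarithmic coordinates: both proofs telescope the scaling relation along the geometric orbit $\alpha^{n}z_0\to 0$ (you reduce to $\alpha>1$ where the paper reduces to $\alpha<1$, and you use Ces\`aro averaging of $\log\big(P(\alpha z)/P(z)\big)$ where the paper uses the cruder lower bound $\beta/2$ on each ratio) and then contradict the super-polynomial decay forced by infinite-order vanishing. No gaps.
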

\begin{proof}
Suppose that there exist $\alpha>0$ and $\beta>0$ such that $ \lim_{z\to 0}\frac{P(\alpha z)}{P(z)}=\beta $. Then, we have 
$$
\frac{P(\alpha z)}{P(z)}=\beta+\gamma(z),
$$
where $\gamma$ is a function defined on $\Delta_{\epsilon_0}$ with $\gamma(z)\to 0$ as $z\to 0$. Since $\gamma(z)\to 0$ as $z\to 0$, there exists $\delta_0>0$ such that $|\gamma(z)|<\beta/2$ for any $z\in \Delta_{\delta_0}$. 

We consider the following cases.

\noindent
{\bf Case 1.} $0<\alpha<1$. In this case, fix $z_0\in \Delta_{\delta_0}^* $. Then for each positive interger $n$ we get
\begin{equation}\label{eeq1}
\begin{split}
\frac{P(\alpha^n z_0)}{P(z_0)}&=\frac{P(\alpha^n z_0)}{P(\alpha^{n-1} z_0)}\cdots \frac{P(\alpha z_0)}{P(z_0)}\\
                              &=\big(\beta+\gamma(\alpha^{n-1}z_0)\big) \cdots \big(\beta+\gamma(z_0)\big)\\
                              &\geq \big(\beta-|\gamma(\alpha^{n-1}z_0)|\big) \cdots \big(\beta-|\gamma(z_0)|\big)\\
                              &\geq  \big(\beta/2\big)^n.
\end{split}
\end{equation}
Moreover, let us choose a positive integer $m_0$ such that $\alpha^{m_0}<\beta/2$. Then it follows from (\ref{eeq1}) that
\begin{equation}\label{eeq2}
\begin{split}
\frac{P(\alpha^n z_0)}{\big(\alpha^n |z_0|\big)^{m_0}}&\geq\frac{P(z_0)}{|z_0|^{m_0}}\Big(\frac{\beta/2}{\alpha^{m_0}}\Big)^n.
\end{split}
\end{equation}
This yields that $\frac{P(\alpha^n z_0)}{(\alpha^n |z_0|)^{m_0}}\to +\infty$ as $n\to \infty$, which contradicts the fact that $P$ vanishes to infinite order at $0$. 

\noindent
{\bf Case 2.} $1<\alpha$. Since $\lim_{z\to 0}\frac{P(\alpha z)}{P(z)}=\beta$, it follows that $\lim_{z\to 0}\frac{P(\frac{1}{\alpha} z)}{P(z)}=\frac{1}{\beta}$. Following case $1$, it is impossible.

Therefore, $\alpha=1$ and thus it is obvious that $\beta=1$. The proof is complete.
\end{proof}
\begin{lemma}\label{lemma3} Let $p(t)$ be a $\mathcal{C}^\infty$-smooth function on $(0,\epsilon_0)~(\epsilon_0>0)$ such that the function
\[
P(z)=
\begin{cases}
e^{p(|z|)}&~\text{if}~z\in \Delta^*_{\epsilon_0} \\
0     &~\text{if}~z=0
\end{cases}
\]
 wanishes to infinite order at $z=0$. Let $\beta\in \mathcal{C}^\infty(\Delta_{\epsilon_0})$ with $\beta(0)=0$. Then  
$$
P(|z+z\beta(z)|)-P(|z|)=P(|z|)\Big(|z|p'(|z|)\big(\mathrm{Re}(\beta(z)+o(\beta(z))\big)\Big)+o((\beta(z))^2)
$$
for any $z\in  \Delta^*_{\epsilon_0}$ satisfying $z+z\beta(z)\in \Delta_{\epsilon_0}$.
\end{lemma}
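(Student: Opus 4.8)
The plan is to reduce this two–real–dimensional statement to a single Taylor expansion of the radial profile. Write $r=|z|$ and $r'=|z+z\beta(z)|=|z|\,|1+\beta(z)|$, and regard $\tilde P(r):=e^{p(r)}$ as a $\mathcal{C}^\infty$ function of $r\in(0,\epsilon_0)$, so that $P(|z+z\beta(z)|)-P(|z|)=\tilde P(r')-\tilde P(r)$. Observe that $\tilde P'(r)=p'(r)\tilde P(r)$ and $\tilde P''=(p''+(p')^2)\tilde P$, and that, since the restriction of $P(z)=\tilde P(|z|)$ to the positive real axis agrees with $\tilde P$ and inherits infinite–order vanishing at $0$, Corollary~\ref{corollary1} gives that $\tilde P$, $\tilde P'$ and $\tilde P''$ extend continuously to $r=0$ with value $0$. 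The whole problem is thus to expand $\tilde P(r')-\tilde P(r)$ to the required precision.

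First I would expand the modulus. From $|1+\beta(z)|^2=1+2\,\mathrm{Re}\,\beta(z)+|\beta(z)|^2$ one obtains, for $\beta(z)$ small,
\[
|1+\beta(z)|=1+\mathrm{Re}\,\beta(z)+O(|\beta(z)|^2),
\]
where the error is real; hence
\[
r'-r=|z|\big(\mathrm{Re}\,\beta(z)+O(|\beta(z)|^2)\big)=|z|\,\mathrm{Re}\big(\beta(z)+o(\beta(z))\big),
\]
the last step because $O(|\beta(z)|^2)=o(\beta(z))$ as $z\to0$ and the remainder is real.

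Next I would apply the second–order Taylor formula with Lagrange remainder to $\tilde P$ on the segment joining $r$ and $r'$, which lies in $(0,\epsilon_0)$ for $z\neq0$ and $\beta$ small:
\[
\tilde P(r')-\tilde P(r)=\tilde P'(r)(r'-r)+\tfrac12\tilde P''(\xi)(r'-r)^2,
\]
for some $\xi$ between $r$ and $r'$. Substituting $\tilde P'(r)=p'(|z|)P(|z|)$ together with the expansion of $r'-r$ reproduces exactly the asserted principal term $P(|z|)\,|z|\,p'(|z|)\,\mathrm{Re}\big(\beta(z)+o(\beta(z))\big)$.

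The only genuinely analytic point, and the step I expect to be the main obstacle, is to show that the remainder $\tfrac12\tilde P''(\xi)(r'-r)^2$ is $o(\beta(z)^2)$. One cannot bound $\tilde P''=(p''+(p')^2)\tilde P$ crudely, since the factor $p'(|z|)$ may blow up as $z\to0$; instead I would use $(r'-r)^2=O(|z|^2|\beta(z)|^2)$ and the infinite–order vanishing of $\tilde P''$, which forces $\tilde P''(\xi)\to0$ as $z\to0$ (here $\xi\to0$ because it lies between $r$ and $r'$, both of which tend to $0$). Dividing by $\beta(z)^2$ then gives $\tilde P''(\xi)(r'-r)^2/\beta(z)^2=\tilde P''(\xi)\,O(|z|^2)\to0$, so the remainder is indeed $o(\beta(z)^2)$, which completes the identity.
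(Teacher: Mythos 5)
Your proof is correct and is essentially the paper's own argument: a second-order Taylor expansion of the radial profile with Lagrange remainder, the elementary expansion $|z+z\beta(z)|-|z|=|z|\big(\mathrm{Re}\,\beta(z)+o(\beta(z))\big)$, the identity $P'(|z|)=p'(|z|)P(|z|)$ for the principal term, and the observation that $P''(\xi)\to0$ (forced by infinite-order vanishing) to absorb the remainder into $o(\beta(z)^2)$. The only difference is that you justify the continuity of the second radial derivative at $0$ via Corollary~\ref{corollary1}, which the paper merely asserts.
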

\begin{proof}
By Taylor's theorem, for any $z\in  \Delta^*_{\epsilon_0}$ satisfying $z+z\beta(z)\in \Delta_{\epsilon_0}$ we have
\begin{equation}\label{eeq3}
\begin{split}
P(|z+z\beta(z)|)=P(|z|)+\frac{P'(|z|)}{1!}\big(|z+z\beta(z)|-|z|\big)+\frac{P''(\xi_z)}{2}\big(|z+z\beta(z)|-|z|\big)^2
\end{split}
\end{equation}
for some real number $\xi_z$ between $|z|$ and $|z+z\beta(z)|$. 

On the other hand,
\begin{equation}\label{eeq4}
\begin{split}
|z+z\beta(z)|-|z|&=\frac{|z+z\beta(z)|^2-|z|^2}{|z+z\beta(z)|+|z|}=\frac{2|z|^2\mathrm{Re}(\beta(z))+|z|^2|\beta(z)|^2}{|z+z\beta(z)|+|z|}\\
                  &= |z|\big(\mathrm{Re}(\beta(z))+o(\beta(z))\big).
\end{split}
\end{equation}
Moreover, $P'(|z|)=P(|z|)p'(|z|)$ for all $z\in \Delta^*_{\epsilon_0}$ and $P''(\xi_z)\to 0$ as $z\to 0$. Therefore, the proof follows from (\ref{eeq3}) and (\ref{eeq4}).

\end{proof}
\begin{lemma}\label{lemma4} Let $P(z)=e^{p(|z|)+g(z)}$ be a $\mathcal{C}^\infty$-smooth function on $\Delta_{\epsilon_0}~(\epsilon_0>0)$ wanishing to infinite order at $z=0$, where $g\in\mathcal{C}^\infty(\Delta_{\epsilon_0})$ and $p\in \mathcal{C}^\infty(0,\epsilon_0)$. Let $\beta\in \mathcal{C}^\infty(\Delta_{\epsilon_0})$ with $\beta(z)=O(P(z))$. Then 
$$
P(z+z\beta(z))-P(z)=P(z)\Big[|z|p'(|z|)\big(\mathrm{Re}(\beta(z))+o(\beta(z))\big)+o(\beta(z))\Big]
$$
for any $z\in  \Delta^*_{\epsilon_0}$ satisfying $z+z\beta(z)\in \Delta_{\epsilon_0}$.
\end{lemma}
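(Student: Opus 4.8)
The plan is to reduce the statement to Lemma \ref{lemma3} by factoring $P$ into its radial part and the exponential of $g$. Write $w=z+z\beta(z)$ and set $\tilde P(z):=e^{p(|z|)}$ for $z\neq 0$ and $\tilde P(0):=0$, so that $P(z)=\tilde P(z)\,e^{g(z)}$. Since $g\in\mathcal C^\infty(\Delta_{\epsilon_0})$, both $e^{g}$ and $e^{-g}$ are smooth and bounded near the origin; hence $\tilde P=P\,e^{-g}$ is smooth and, being a product of an infinite-order vanishing function with a smooth one, it vanishes to infinite order at $0$ (by the Leibniz rule every derivative of the product vanishes at $0$). Thus $\tilde P$ satisfies the hypotheses of Lemma \ref{lemma3}, and moreover the assumption $\beta=O(P)$ gives $\beta=O(\tilde P(|z|))$ because $e^{g}$ is bounded away from $0$.

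Next I would use the exact identity
\[
P(w)-P(z)=e^{g(w)}\big(\tilde P(|w|)-\tilde P(|z|)\big)+\tilde P(|z|)\big(e^{g(w)}-e^{g(z)}\big),
\]
and treat the two summands separately. For the second summand, smoothness of $g$ together with $|w-z|=|z|\,|\beta(z)|$ gives $g(w)-g(z)=O(|z||\beta(z)|)$, whence $e^{g(w)}-e^{g(z)}=e^{g(z)}O(|z||\beta(z)|)$ and the summand equals $P(z)\,O(|z||\beta(z)|)=P(z)\,o(\beta(z))$ since $|z|\to 0$. For the first summand, apply Lemma \ref{lemma3} to $\tilde P$ to write $\tilde P(|w|)-\tilde P(|z|)$ as $\tilde P(|z|)|z|p'(|z|)\big(\mathrm{Re}(\beta(z))+o(\beta(z))\big)+o((\beta(z))^2)$, and expand $e^{g(w)}=e^{g(z)}\big(1+O(|z||\beta(z)|)\big)$. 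Multiplying out produces the desired leading term $P(z)|z|p'(|z|)\mathrm{Re}(\beta(z))$ together with several remainders.

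The main point is to verify that every remainder lands in $P(z)\big[|z|p'(|z|)o(\beta(z))+o(\beta(z))\big]$, and here the hypothesis $\beta=O(P)$ is essential. The cross terms carrying the factor $|z|p'(|z|)$ acquire an extra $O(|z||\beta(z)|)$, and since $O(|z||\beta|)\cdot O(\beta)=O(|z|\beta^2)=o(\beta)$ they are absorbed into $P(z)|z|p'(|z|)o(\beta(z))$; note that one cannot simply claim that $|z|p'(|z|)$ is bounded (already $p(r)=-1/r$ makes it blow up), so it is important that these terms retain the factor $|z|p'(|z|)$. The genuinely delicate term is the standalone $o((\beta(z))^2)$ coming from the second-order Taylor remainder in Lemma \ref{lemma3}, which is \emph{not} multiplied by $\tilde P(|z|)$; multiplying it by the bounded factor $e^{g(z)}$ does not obviously yield something of size $P(z)\,o(\beta)$. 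This is exactly where $\beta=O(P)$ enters: it gives $o((\beta(z))^2)=\tilde P(|z|)\,o(\beta(z))$, so that after multiplication by $e^{g(z)}$ the term becomes $P(z)\,o(\beta(z))$. Collecting the leading term with all remainders then yields the claimed formula, and this conversion of $o(\beta^2)$ via $\beta=O(P)$ is the step I expect to require the most care.
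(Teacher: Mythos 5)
Your proposal is correct and follows essentially the same route as the paper: both reduce to Lemma \ref{lemma3} applied to the radial factor $e^{p(|z|)}$, use smoothness of $g$ to get $e^{g(z+z\beta(z))-g(z)}=1+O(|z|\beta(z))=1+o(\beta(z))$, and invoke $\beta=O(P)$ precisely to absorb the standalone $o((\beta)^2)$ remainder from Lemma \ref{lemma3} into $P(z)\,o(\beta(z))$. The only difference is cosmetic (you split $P(w)-P(z)$ as a sum of two products, while the paper writes it as $P(z)\bigl[\tfrac{e^{p(|w|)}}{e^{p(|z|)}}e^{g(w)-g(z)}-1\bigr]$), and your identification of the $o(\beta^2)$ term as the point where the hypothesis $\beta=O(P)$ is indispensable matches the paper's (tacit) use of it.
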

\begin{proof}
Since $\beta(z)=O(P(z))$, by Lemma \ref{lemma3} we have
$$
\frac{e^{p(|z+z\beta(z)|)}}{e^{p(|z|)}}=1+|z|p'(|z|)\big(\mathrm{Re}(\beta(z))+o(\beta(z))\big)+o(\beta(z))
$$
for any $z\in  \Delta^*_{\epsilon_0}$ satisfying $z+z\beta(z)\in \Delta_{\epsilon_0}$. Then we obtain
\begin{equation*}
\begin{split}
&P(z+z\beta(z))-P(z)=P(z)\Big[\frac{e^{p(|z+z\beta(z)|)}}{e^{p(|z|)}} e^{g(z+z\beta(z))-g(z)}-1\Big]\\
&\quad=P(z)\Big[\Big(1+|z|p'(|z|)\big(\mathrm{Re}(\beta(z))+o(\beta(z))\big)+o(\beta(z))\Big) e^{g(z+z\beta(z))-g(z)}-1\Big]\\
& \quad =P(z)\Big[|z|p'(|z|)\big(\mathrm{Re}(\beta(z))+o(\beta(z))\big)+o(\beta(z))\Big]
\end{split}
\end{equation*}
for any $z\in  \Delta^*_{\epsilon_0}$ satisfying $z+z\beta(z)\in \Delta_{\epsilon_0}$. This ends the proof.
\end{proof}
\section{Stabilty group of $M(a,\alpha,p,q)$}
This section is entirely devoted to the proof of Theorem A. Let $a,\alpha,\epsilon_0,$ $\delta_0,F,$ $P,P_1,p,q$ be given as in Section  $1$. In what follows, $F$ can be written as $F(z_2,t)=tQ(z_2,t)$, where $Q$ is $\mathcal{C}^\infty$-smooth satisfying $Q(0,0)=0$. For a proof of Theorem A, we need the following lemmas.

\begin{lemma}\label{lemma5}
If $f=(f_1,f_2)\in \mathrm{Aut}\big(M(a,\alpha,p,q),0\big)$ satisfying $f_2(z_1,z_2)=\alpha z_2+\sum_{k,j=1}^\infty b_{kj} z_1^k z_2^j$, where $\alpha>0$ and $b_{kj}\in \mathbb C~(k,j\in \mathbb N^*) $, then $\alpha=1$ and $f_1(z_1,z_2)=z_1+o(z_1)$.
\end{lemma}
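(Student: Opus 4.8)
The plan is to play the invariance of $M=M(a,\alpha,p,q)$ under $f$ against the infinite-order vanishing of $P$, and to funnel everything into a single application of Lemma \ref{lemma2}. (Here I use $\alpha$ for the coefficient $\partial f_2/\partial z_2(0)$ appearing in the statement.) Write $f_1=c_{10}z_1+c_{01}z_2+(\text{higher order})$. First I would record the differential data. Since $f(0)=0$ carries the germ of $M$ to itself, one has $\rho\circ f=\lambda\,\rho$ for a smooth nonvanishing factor $\lambda$ near $0$, and because $P$ vanishes to infinite order while $F(z_2,t)=tQ(z_2,t)$ with $Q(0,0)=0$, we get $d\rho_0=d(\mathrm{Re}\,z_1)$. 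Linearizing $\rho\circ f=\lambda\rho$ gives $d\rho_0\circ df_0=\lambda(0)\,d\rho_0$; testing this on $(0,1),(0,i),(1,0),(i,0)$ forces $c_{01}=0$ and $c_{10}=\lambda(0)\in\RR\setminus\{0\}$ (invertibility of $df_0$ together with $\alpha\neq0$ also gives $c_{10}\neq0$).

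Next I would restrict to a curve inside $M$. Since $F(z_2,0)=0$, the real surface $\Gamma=\{(-P(w),w)\}$ lies on $M$. Because every $b_{kj}$-term of $f_2$ carries a factor $z_1^k=(-P(w))^k$ with $k\geq1$, evaluation on $\Gamma$ yields $W:=f_2(-P(w),w)=\alpha w+O(P(w)|w|)$ and, likewise, $Z_1:=f_1(-P(w),w)=-c_{10}P(w)+g(w)+O(P(w)|w|)$, where $g(w)=\sum_{j\geq2}c_{0j}w^j$ is the pure $z_2$-part of $f_1$. Plugging $(Z_1,W)\in M$ into $\mathrm{Re}\,Z_1+P(W)+\mathrm{Im}(Z_1)Q(W,\mathrm{Im}\,Z_1)=0$ and noting that $P(W),P(w)$ are of infinite order in $w$ (as $|W|\asymp|w|$) while $Q(0,0)=0$ gives $|Q|=O(|w|)$, I would collect terms to obtain $\mathrm{Re}\,g(w)=O(|g(w)||w|)+O(|w|^\infty)$. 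If $g\not\equiv0$, letting $m\geq2$ be its lowest order produces $\mathrm{Re}(c_{0m}w^m)=O(|w|^{m+1})$, impossible for a nonzero harmonic polynomial of degree $m$; hence $g\equiv0$, i.e. all $c_{0j}=0$.

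With $g\equiv0$ and $c_{10}$ real, one then has $\mathrm{Re}\,Z_1=-c_{10}P(w)(1+o(1))$ and $\mathrm{Im}\,Z_1=O(P(w)|w|)$, so the defining equation collapses to $P(W)=c_{10}P(w)(1+o(1))$; positivity of $P$ already forces $c_{10}>0$. Since $W=\alpha w(1+O(P(w)))$, the infinite-order correction does not alter the leading asymptotics, so by Lemmas \ref{lemma3}--\ref{lemma4} we get $P(W)=P(\alpha w)(1+o(1))$ and therefore $\lim_{w\to0}P(\alpha w)/P(w)=c_{10}$. Now Lemma \ref{lemma2} applies with its $\alpha$ equal to ours and its $\beta=c_{10}>0$, giving $\alpha=1$ and $c_{10}=1$. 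Together with $c_{01}=0$ this is exactly $f_1(z_1,z_2)=z_1+o(z_1)$.

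I expect the main obstacle to be the order-matching in the third step and the accompanying comparison in the last step: one must be sure that the infinite-order quantities $P(W)$ and $P(w)$ are genuinely negligible against the finite-order harmonic polynomials $\mathrm{Re}(c_{0j}w^j)$ at every order, and that replacing $W$ by $\alpha w$ inside $P$ costs only a $(1+o(1))$ factor. This is precisely the kind of estimate Lemmas \ref{lemma3} and \ref{lemma4} are built to supply, so the difficulty is to apply them with the correct bookkeeping rather than to prove anything genuinely new.
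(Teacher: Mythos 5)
Your proof is correct and takes essentially the same approach as the paper's: restrict the invariance identity to the $t=0$ slice $\{(-P(w),w)\}$ of $M$, kill the pure-$z_2$ part of $f_1$ by comparing a nonzero harmonic polynomial against the infinite-order-vanishing terms, extract $\lim_{w\to 0}P(\alpha w)/P(w)=c_{10}>0$ via Lemma \ref{lemma4} and Corollary \ref{corollary1}, and finish with Lemma \ref{lemma2}. The only difference is cosmetic: you obtain $c_{01}=0$ and $c_{10}\in\RR$ from the linearized relation $d\rho_0\circ df_0=\lambda(0)\,d\rho_0$, whereas the paper reads off the same information by substituting $z_2=0$ (and treating the case $f_1(0,z_2)\not\equiv 0$ separately) in the full identity.
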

\begin{proof}
 Expand $f_1$ into Taylor series, we get
$$
f_1(z_1,z_2)=\sum_{k,j=0}^\infty a_{kj} z_1^k z_2^j,
$$
where $a_{jk}\in \mathbb C~(j,k\in\mathbb N)$. Note that $a_{00}=f_1(0,0)=0$.
Since $f(M(a,\alpha,p,q))\subset M(a,\alpha,p,q)$, we have
\begin{equation}\label{qt1}
\begin{split}
&\mathrm{Re}\Big(\sum_{k,j=0}^\infty a_{kj}\big(it-P(z_2)-t Q(z_2,t)\big)^k z_2^j\Big)\\
&+P\Big(\alpha z_2+ \sum_{k,j=1}^\infty b_{kj}\big(it-P(z_2)-t Q(z_2,t)\big)^k z_2^j\Big)\\
&+\mathrm{Im}\Big( \sum_{k,j=0}^\infty a_{kj}\big(it-P(z_2)-t Q(z_2,t)\big)^k z_2^j\Big)\\
&\times Q\Big(\alpha z_2+\sum_{k,j=1}^\infty b_{kj}(it-P(z_2)-t Q(z_2,t))^k z_2^j,\\
&\qquad \mathrm{Im}\big(\sum_{k,j=0}^\infty a_{kj}(it-P(z_2)-t Q(z_2,t))^k z_2^j\big)\Big)\equiv 0
\end{split}
\end{equation}
on $\Delta_{\epsilon_0}\times (-\delta_0,\delta_0)$ for some $\epsilon_0,\delta_0>0$.

We now consider the following cases.

\noindent
{\bf Case 1.} $f_1(0,z_2)\not \equiv 0$.
In this case, there is $j_1\in\mathbb N^*$ such that $a_{0j_1}\ne 0$ and $f_1(z_1,z_2)=a_{0j_1}z_2^{j_1}+o(z_2^{j_1})+O(z_1)$. 
Since $P(z_2)=o(|z_2|^{j_1})$, letting $t=0$ in (\ref{qt1}), one deduces that $\mathrm{Re}(a_{0j_1}z_2^{j_1})+ o(|z_2|^{j_1})\equiv 0$ on $\Delta_{\epsilon_0}$, which is impossible.

\noindent
{\bf Case 2.} $f_1(0,z_2)\equiv 0$. We can write $f_1(z_1,z_1)=\beta z_1+o(z_1)$, where $\beta\in \mathbb C^*$.
By (\ref{qt1}), we get
\begin{equation}\label{qqt1}
\begin{split}
&\mathrm{Re}\Big(\beta (it-P(z_2)-t Q(z_2,t))+o\big(it-P(z_2)-t Q(z_2,t)\big)\Big)\\
&+P\big(\alpha z_2+z_2O(it-P(z_2)-t Q(z_2,t))\big)\\
&+\mathrm{Im}\Big(\beta (it-P(z_2)-t Q(z_2,t))+o(it-P(z_2)-t Q(z_2,t))\Big)\\
&\times Q\Big(\alpha z_2+z_2O(it-P(z_2)-t Q(z_2,t)),\\
&\quad\quad\mathrm{Im}\big(\beta (it-P(z_2)-t Q(z_2,t))+o(it-P(z_2)-t Q(z_2,t))\big)\Big)\equiv 0
\end{split}
\end{equation}
on $\Delta_{\epsilon_0}\times (-\delta_0,\delta_0)$. In particular, inserting $z_2=0$ into (\ref{qqt1}) one has $\mathrm{Re}(\beta i)+O(t)\equiv 0$, and this thus implies $\mathrm{Im}(\beta)=0$. 

On the other hand, letting $t=0$ in (\ref{qqt1}) we obtain
\begin{equation*}
\begin{split}
-\mathrm{Re}(\beta)P(z_2)+ P\big(\alpha z_2+z_2O(P(z_2))\big)+ o(P(z_2))\equiv 0
\end{split}
\end{equation*}
on $\Delta_{\epsilon_0}$. This yields that $\lim_{z_2\to 0} P\big(\alpha z_2+z_2O(P(z_2))\big)/P(z_2)=\mathrm{Re}(\beta)> 0$. By Lemma \ref{lemma4} and the fact that $P(z_2)p'(|z_2|)$ vanishes to infinite order at $z_2=0$ (cf. Corollary \ref{corollary1}), we deduce that
 \begin{equation*}
  \lim_{z_2\to 0}\frac{P(\alpha z_2)}{P(z_2)}=\lim_{z_2\to 0} \frac{P\big(\alpha z_2+z_2O(P(z_2))\big)}{P(z_2)}=\mathrm{Re}(\beta)> 0.
 \end{equation*}
  Therefore, by Lemma \ref{lemma2} we conclude that $\alpha=\beta=1$. The proof is now complete.
\end{proof}
\begin{lemma}\label{lemma6}
If $f\in \mathrm{Aut}\big(M(a,\alpha,p,q),0\big)$ satisfying $f_1(z_1,z_2)$ $=z_1+\sum_{k=1}^\infty \sum_{j=0}^\infty $ $ a_{kj}z_1^k z_2^j$ with $a_{10}=0$ and $f_2(z_1,z_2)=z_2+\sum_{k,j=1}^\infty b_{kj}z_1^k z_2^j$, where $a_{kj},b_{kj}\in \mathbb C~(k,j\in \mathbb N)$, then $f=id$.
\end{lemma}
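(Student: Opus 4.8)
The plan is to exploit the invariance equation $\rho(f(z))=0$ on $M(a,\alpha,p,q)$, observing first that the hypotheses on $f$ amount exactly to $df(0)=\mathrm{id}$, so that what remains is a pure higher-order rigidity statement. I would parametrize $M$ by $(z_2,t)$ through $z_1=it-P(z_2)-F(z_2,t)$, recall $F(z_2,t)=tQ(z_2,t)$ with $Q(0,0)=0$, and specialize (\ref{qt1}) to the present normalized form of $f$. The decisive structural feature I mean to use throughout is the contrast between genuine \emph{polynomial} data in $z_2$ (finite vanishing order) and the \emph{flat} function $P$, together with all of $P'$, $Pp'$, etc., which are again flat by Corollary \ref{corollary1}. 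By Lemma \ref{lemma1} a flat function is $o(|z_2|^m)$ for every $m$, hence can never match a nonzero lowest-order homogeneous term; this is the mechanism that turns the single real equation $\rho\circ f=0$ into a rigid hierarchy of polynomial identities.

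The model step I would carry out first restricts to the slice $t=0$, where $z_1=-P(z_2)$. Writing $f_2(-P,z_2)=z_2\big(1+\beta(z_2)\big)$ with $\beta(z_2)=O(P(z_2))$, Lemma \ref{lemma4} shows that $P(f_2(-P,z_2))-P(z_2)$ is built from $P$, $|z_2|p'(|z_2|)P$ and $o(P)$, all flat; hence after the cancellation $-P(z_2)+P(z_2)$ coming from $\mathrm{Re}\,f_1$ and $P(f_2)$, dividing by $P(z_2)>0$ leaves
\[
\mathrm{Re}\Big(\sum_{j\geq 1}a_{1j}z_2^j\Big)+\tan\big(R(z_2)\big)\,\mathrm{Im}\Big(\sum_{j\geq 1}a_{1j}z_2^j\Big)=G(z_2),
\]
where $G$ vanishes to infinite order at $0$. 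Since $R(0)=0$ gives $\tan(R(z_2))=O(|z_2|)$, the left-hand side begins with the harmonic term $\mathrm{Re}(a_{1j_0}z_2^{j_0})$, where $j_0=\min\{j:a_{1j}\neq 0\}$, and along a ray with $a_{1j_0}z_2^{j_0}>0$ it is comparable to $|z_2|^{j_0}$; by Lemma \ref{lemma1} the right-hand side is $o(|z_2|^{j_0})$, a contradiction. Thus all $a_{1j}=0$, and with $a_{10}=0$ this yields $f_1=z_1+O(z_1^2)$.

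From here I would run an induction on the total degree $N$ of the terms of $f-\mathrm{id}$, assuming $f\equiv\mathrm{id}$ modulo degree $N$ and proving the degree-$N$ part vanishes. Each step uses two independent probes of (\ref{qt1}): the slice $t=0$, which via Lemma \ref{lemma4} and flatness controls the coefficients appearing at the relevant power of $P$ (with Lemma \ref{lemma2} handling the rescaling obstruction exactly as in the proof of Lemma \ref{lemma5}), and the expansion in powers of $t$, which captures the genuinely analytic information carried by $F$ through $Q$ and detects coefficients invisible on the base slice. Matching coefficients of $z_2^j$ and of $t^m$ degree by degree should produce enough linear relations to force every $a_{kj}$ and $b_{kj}$ of degree $N$ to vanish, completing the induction and giving $f=\mathrm{id}$.

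The main obstacle I anticipate is the \emph{disentangling} of the coefficients at each order. Because $P$ is only $\mathcal{C}^\infty$ and flat rather than real-analytic, one cannot complexify $z_1,\bar z_1$ and read the equation as a single holomorphic identity; the flat part genuinely blocks the separation of variables that would otherwise trivialize the problem. Consequently the $a_{kj}$ and $b_{kj}$ enter coupled — already the order-$P^2$ part of the $t=0$ slice mixes $a_{2j}$ with $b_{1j}$ — so the real work is to arrange the two probes so that, at each degree, the number of independent relations matches the number of new unknowns. Checking that this bookkeeping closes up, leaving no residual family of nontrivial tangent-to-identity automorphisms, is the crux of the argument.
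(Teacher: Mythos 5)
Your opening reduction and your first step (the slice $t=0$, forcing $a_{1j}=0$ for all $j$) are sound and consistent with the paper. But the proof stops exactly where the lemma becomes nontrivial: you name the disentangling of the $a_{kj}$ from the $b_{k'j'}$ at the same order of $P$ as ``the crux'' and then leave it as a hope that ``the bookkeeping closes up.'' That resolution is the entire content of the lemma, and the two probes you propose are not obviously sufficient. Concretely, at order $P^2$ on the slice $t=0$ the unknowns $a_{2j}$ enter through $\mathrm{Re}(a_{2j}z_2^j)\,P^2(z_2)$ while the $b_{1j}$ enter through $P^2(z_2)\,|z_2|p'(|z_2|)\,\mathrm{Re}(b_{1j}z_2^{\,j-1})$ via Lemma \ref{lemma4}; separating these requires quantitative information about $|z_2|p'(|z_2|)$, which your argument never invokes. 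Expanding in powers of $t$ at fixed $z_2$ does not obviously help, since every coefficient of $t^m$ still carries the flat function $P(z_2)$ mixed with the polynomial data, so the same collision reappears there.

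The paper closes this gap with three ingredients absent from your plan. First, instead of the slice $t=0$ it substitutes $t=\alpha P(z_2)$ with a free real parameter $\alpha$; this yields a one-parameter family of one-variable identities, and the freedom in $\alpha$ is used to guarantee nondegeneracy of the leading coefficient $\mathrm{Re}\big(a_{k_1j_1}(\alpha i-1)^{k_1}\big)$. Second, it grades not by total degree but by the power of the flat function $P$: if $k_1$ is the lowest $z_1$-degree occurring in $f_1-z_1$ and $k_2$ the lowest occurring in $f_2-z_2$, then the $a$-terms appear at order $P^{k_1}$ and the $b$-terms at order $P^{k_2+1}$, and since distinct powers of a flat positive function live at mutually incomparable scales, the whole problem reduces to the single comparison of $k_1$ with $k_2+1$ — three cases, no induction. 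Third, in the genuinely coupled case $k_1=k_2+1$ it uses $\limsup_{r\to0^+}|rp'(r)|=+\infty$ (forced by the flatness of $P$) together with the linear independence of $\{1,\cos\theta,\sin\theta,\dots\}$ to show that the $b$-contribution, carrying the unbounded factor $|z_2|p'(|z_2|)$, cannot cancel against the harmonic polynomial $a$-contribution. Without some version of these steps your induction has no mechanism for resolving the collision you yourself identify, so the argument as written is incomplete.
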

\begin{proof}
Since $f$ preserves $M(a,\alpha,p,q)$, it follows that
\begin{equation}\label{qqt2}
\begin{split}
&\mathrm{Re}\Big(\big(it-P(z_2)-t Q(z_2,t)\big)+\sum_{k=1}^\infty\sum_{j=0}^\infty  a_{kj}\big(it-P(z_2)-t Q(z_2,t)\big)^k z_2^j\Big)\\
&+P\Big(z_2+\sum_{k,j=1}^\infty  b_{kj}\big(it-P(z_2)-t Q(z_2,t)\big)^k z_2^j\Big)\\
&+\mathrm{Im}\Big(\big(it-P(z_2)-t Q(z_2,t)\big)+\sum_{k=1}^\infty \sum_{j=0}^\infty a_{kj}\big(it-P(z_2)-t Q(z_2,t)\big)^k z_2^j\Big)\\
&\times Q\Big(z_2+\sum_{k,j=1}^\infty b_{kj}\big(it-P(z_2)-t Q(z_2,t)\big)^k z_2^j,\\
&\qquad \mathrm{Im}\Big(\big(it-P(z_2)-t Q(z_2,t)\big)+\sum_{k=1}^\infty\sum_{j=0}^\infty  a_{kj}\big(it-P(z_2)-t Q(z_2,t)\big)^k z_2^j\Big)\Big)\equiv 0,
\end{split}
\end{equation}
or equivalently,
\begin{equation}\label{qqt3}
\begin{split}
&\mathrm{Re}\Big(\sum_{k=1}^\infty \sum_{j=0}^\infty a_{kj}\big(it-P(z_2)-t Q(z_2,t)\big)^k z_2^j\Big)\\
&+P\Big(z_2+\sum_{k,j=1}^\infty  b_{kj}\big(it-P(z_2)-t Q(z_2,t)\big)^k z_2^j\Big)-P(z_2)\\
& +t \Big[ Q\Big(z_2+\sum_{k,j=1}^\infty  b_{kj}\big(it-P(z_2)-t Q(z_2,t)\big)^k z_2^j,\\
&\qquad\quad t+\mathrm{Im}\big(\sum_{k=1}^\infty \sum_{j=0}^\infty a_{kj}\big(it-P(z_2)-t Q(z_2,t)\big)^k z_2^j\big)\Big)-Q(z_2,t)\Big]\\
&+\mathrm{Im}\Big(\sum_{k=1}^\infty \sum_{j=0}^\infty a_{kj}\big(it-P(z_2)-t Q(z_2,t)\big)^k z_2^j\Big)\\
&\times Q\Big(z_2+\sum_{k,j=1}^\infty  b_{kj}\big(it-P(z_2)-t Q(z_2,t)\big)^k z_2^j,\\
&\quad \qquad t+\mathrm{Im}\big(\sum_{k=1}^\infty \sum_{j=0}^\infty a_{kj}\big(it-P(z_2)-t Q(z_2,t)\big)^k z_2^j\big)\Big)\equiv 0
\end{split}
\end{equation}
on $\Delta_{\epsilon_0}\times (-\delta_0,\delta_0)$ for some $\epsilon_0,\delta_0>0$.

If $f_1(z_1,z_2)\equiv z_1$, then let $k_1=+\infty$. In the contrary case, let $k_1$ be the smallest integer $k$ such that $a_{kj}\ne 0$ for some $j\in \mathbb N^*$. Then let $j_1$ be the smallest integer $j$ such that $a_{k_1j}\ne 0$. Similarly, if $f_2(z_1,z_2)\equiv z_2$, then denote by $k_2=+\infty$. Otherwise, let $k_2$ be the smallest integer $k$ such that $b_{kj}\ne 0$ for some $j\in \mathbb N^*$. Denote by $j_2$ the smallest integer $j$ such that $b_{k_2j}\ne 0$. 

Since $P(z_2)=o(|z|^j)$ for any $j\in \mathbb N$, inserting $t=\alpha P(z_2)$ into (\ref{qqt3})
(with $\alpha \in \mathbb R$ to be chosen later) one gets
\begin{equation}\label{qqt4}
\begin{split}
&\mathrm{Re} \Big(a_{k_1j_1}P^{k_1}(z_2)(\alpha i-1)^{k_1}\big( z_2^{j_1}+o(|z_2|^{j_1})\big)\Big)\\
&+P\Big(z_2+b_{k_2j_2}P^{k_2}(z_2) (\alpha i-1)^{k_2}\big(z_2^{j_2}+o(|z_2|^{j_2})\big)\Big)-P(z_2)\\
& +\alpha P(z_2) \Big[ Q\Big(z_2+b_{k_2j_2}P^{k_2}(z_2)(\alpha i-1)^{k_2}\big(z_2^{j_2}+o(|z_2|^{j_2})\big),\\
&\qquad \alpha P(z_2)+\mathrm{Im}\big(a_{k_1j_1}P^{k_1}(z_2)(\alpha i-1)^{k_1}\big( z_2^{j_1}+o(|z_2|^{j_1})\big)\big)\Big)-Q(z_2,\alpha P(z_2))\Big]\\
&+\mathrm{Im}\Big(a_{k_1j_1}P^{k_1}(z_2)(\alpha i-1)^{k_1}\big( z_2^{j_1}+o(|z_2|^{j_1})\big)\Big)\\
&\times Q\Big(z_2+b_{k_2j_2}P^{k_2}(z_2)(\alpha i-1)^{k_2}\big(z_2^{j_2}+o(|z_2|^{j_2})\big),\\
&\qquad \alpha P(z_2)+\mathrm{Im}\big(a_{k_1j_1}P^{k_1}(z_2)(\alpha i-1)^{k_1} \big( z_2^{j_1}+o(|z_2|^{j_1})\big)\big)\Big)\equiv 0
\end{split}
\end{equation}
on $\Delta_{\epsilon_0}$. Since $Q(0,0)=0$, (\ref{qqt4}) tells us that 
\begin{equation}\label{qqt4,5}
\begin{split}
&P\Big(z_2+b_{k_2j_2}P^{k_2}(z_2) (\alpha i-1)^{k_2}\big(z_2^{j_2}+o(|z_2|^{j_2})\big)\Big)-P(z_2)\\
&\quad +\mathrm{Re} \Big(a_{k_1j_1}P^{k_1}(z_2)(\alpha i-1)^{k_1}\big( z_2^{j_1}+o(|z_2|^{j_1})\big)\Big)+ P^{k_2+1}(z_2)o(|z_2|^{j_2})\equiv 0
\end{split}
\end{equation}
on $\Delta_{\epsilon_0}$. Moreover, one has by Lemma \ref{lemma4} that 
\begin{equation}\label{qqt4,51}
\begin{split}
&P^{k_2+1}(z_2)\Big[ |z_2|p'(|z_2|)\Big(\mathrm{Re} \big(b_{k_2j_2}(\alpha i-1)^{k_2}z_2^{j_2-1}\big)+o(|z_2|^{j_2-1})\Big)+o(|z_2|^{j_2-1})\Big]\\
&+P^{k_1}(z_2)\mathrm{Re} \Big[a_{k_1j_1}(\alpha i-1)^{k_1}\big( z_2^{j_1}+o(|z_2|^{j_1})\big)\Big]\equiv 0
\end{split}
\end{equation}
on $\Delta_{\epsilon_0}$.

We now observe that $\limsup_{r\to 0^+} |r p'(r)|=+\infty$, for otherwise one gets $|p(r)|\lesssim |\log(r)|$ for every $0<r<\epsilon_0$, and thus $P$ does not vanish to infinite order at $0$. We thus divide the proof into two cases as follows.

\noindent
{\bf Case 1.}  $k_2<+\infty$ and $k_2+1< k_1\leq +\infty$. 
In this case, $P^{k_1}(z_2)=o(P^{k_2+1}(z_2))$ and hence (\ref{qqt4,51}) yields
\begin{equation}\label{qqt4,52}
\begin{split}
&P^{k_2+1}(z_2)\Big[ |z_2|p'(|z_2|)\Big(\mathrm{Re} \big(b_{k_2j_2}(\alpha i-1)^{k_2}z_2^{j_2-1}\big)+o(|z_2|^{j_2-1})\Big)\Big]\equiv o(P^{k_2+1}(z_2))\\
\end{split}
\end{equation}
on $\Delta_{\epsilon_0}$. It is absurd.
 
\noindent
{\bf Case 2.}  $k_1<+\infty$ and $k_1-1\leq k_2\leq +\infty$. 

By the fact that $P(z_2)p'(|z_2|)$ vanishes to infinite order at $z_2=0$ (see Corollary \ref{corollary1}), Lemma \ref{lemma4}, and  Eq. (\ref{qqt4,51}), it follows that 
\begin{equation}\label{qqt7}
\begin{split}
&\mathrm{Re} \Big(a_{k_1j_1}((\alpha i-1)^{k_1} z_2^{j_1}\Big)+o(|z_2|^{j_1})\equiv 0
\end{split}
\end{equation}
on $\Delta_{\epsilon_0}$. Notice that if $j_1=0$, then $k_1\geq 1$ and $\alpha$ can thus be chosen so that $\mathrm{Re}(a_{k_1j_1}(\alpha i-1)^{k_1})\ne 0$. Therefore, Eq. (\ref{qqt7}) is a contradiction. 

\noindent
{\bf Case 3.} $k_2+1= k_1\leq +\infty$. 

Since $k_2+1= k_1<+\infty$, we have by (\ref{qqt4,51})
 \begin{equation}\label{qqt5}
\begin{split}
&h(z_2):= |z_2|p'(|z_2|)\Big(\mathrm{Re} \big(b_{k_2j_2}(\alpha i-1)^{k_2}z_2^{j_2-1}\big)+o(|z_2|^{j_2-1})\Big)\\
&+\mathrm{Re} \Big[a_{k_1j_1}(\alpha i-1)^{k_1}\big( z_2^{j_1}+o(|z_2|^{j_1})\big)\Big]+o(|z_2|^{j_2-1})\equiv 0
\end{split}
\end{equation}
on $\Delta_{\epsilon_0}$. Because $\limsup_{r\to 0^+}r p'(r)=+\infty$, $j_2-1=j_1+d$ for some $d\in \mathbb N^*$. Theorefore taking $lim_{r\to 0^+} \frac{1}{r^{j_1}}h(r e^{i\theta})$ for each $\theta\in\mathbb R$, from (\ref{qqt4}) one obtains
$$
\mathrm{Re}\big(c_1 e^{i(j_1+d)\theta}\big)= \mathrm{Re}\big(c_2 e^{ij_1\theta}\big)
$$
for every $\theta\in \mathbb R$, where $c_1,c_2\in \mathbb C^*$. This is impossible since $\{ 1, \cos \theta,\sin \theta, \ldots, \cos ((j_1+d)\theta), \sin ((j_1+d)\theta)\}$ are linearly independent.

Altogether, we conclude that $k_1=k_2=+\infty$, and hence the proof is complete.
\end{proof}

Now we are ready to prove Theorem A.
\begin{proof}[Proof of Theorem A]
For $f=(f_1,f_2)\in \mathrm{Aut}\big(M(a,\alpha,p,q),0\big)$, we let $\{F_t\}_{t\in \mathbb R}$ be the family of automorphisms by setting $F_t:=f\circ \phi^{a,\alpha}_{-t}\circ f^{-1}$. Then it follows that $\{F_t\}_{t\in \mathbb R}$ is a one-parameter subgroup of $\mathrm{Aut}\big(M(a,\alpha,p,q),0\big)$. By Theorem \ref{T1}, there exists a real number $\delta$ such that $F_t=\phi^{a,\alpha}_{\delta t}$ for all $t\in \mathbb R$. This implies that
\begin{equation}\label{tte1}
f= \phi^{a,\alpha}_{\delta t}\circ f\circ \phi^{a,\alpha}_{t},~\forall t\in \mathbb R .
\end{equation}
We note that if $\delta=0$, then $f=f\circ \phi^{a,\alpha}_t$ and thus $\phi^{a,\alpha}_t=id$ for any $t\in \mathbb R$, which is a contradiction. Hence, we may assume that $\delta\ne 0$.

Now we shall prove that $\delta=-1$. Indeed, we have by (\ref{tte1})
\begin{equation}\label{tte2}
\begin{split}
 f_2(z_1,z_2)\equiv e^{i\delta t} f_2\big(z_1\exp(\int_0^t a(z_2e^{i\tau})d\tau), z_2 e^{it}\big)
 \end{split}
\end{equation}
on a neighborhood $U$ of $(0,0)\in \mathbb C^2$ and for all $t\in \mathbb R$.

Expand $f_2$ into Taylor series, one obtains that
$$
f_2(z_1,z_2)=\sum_{k,j=0}^\infty b_{kj} z_1^kz_2^j,
$$
where $b_{kj}\in \mathbb C~(k,j\in \mathbb N)$ and $b_{00}=f_2(0,0)=0$. Hence, Eq. (\ref{tte2}) is equivalent to
\begin{equation}\label{tte3}
\begin{split}
\sum_{k,j=0}^\infty b_{kj} z_1^kz_2^j\equiv \sum_{k,j=0}^\infty b_{kj} z_1^kz_2^j \exp\Big(i(j+\delta)t+k\int_0^t a(z_2e^{i\tau})d\tau\Big)
\end{split}
\end{equation}
on $U$ for all $t\in \mathbb R$. Taking the derivative both sides of (\ref{tte3}) with respect to $t$, we arrive at
\begin{equation}\label{tte4}
\begin{split}
\sum_{k,j=0}^\infty b_{kj} z_1^kz_2^j\Big(i(j+\delta)+k a(z_2e^{it})\Big) \exp\Big(i(j+\delta)t+k\int_0^t a(z_2e^{i\tau})d\tau\Big)\equiv 0
\end{split}
\end{equation}
on $U$ for all $t\in \mathbb R$. Moreover, letting $z_2=0$ in (\ref{tte4}) one has
$$
\sum_{k=1}^\infty i\delta b_{k0} z_1^k \equiv 0
$$
on the set $\{z_1\in \mathbb C~|~(z_1,0)\in U\}$. This yields that $b_{k0}=0$ for every $k=1,2,\ldots$. Besides, since $f$ is a biholomorphism we get $b_{01}\ne 0$.

On the other hand, letting $z_1=0$ and $t=0$ in (\ref{tte4}) we conclude that
\begin{equation}\label{e4}
\begin{split}
\sum_{j=1}^\infty b_{0j} z_2^j\Big(i(j+\delta)\Big)\equiv 0
\end{split}
\end{equation}
on $\{z_2\in \mathbb C~|~(0,z_2)\in U\}$. Since $b_{01}\ne 0$, (\ref{e4}) entails that $\delta=-1$ and furthermore $b_{0j}=0$ for all $j=2,3,\ldots$. In addition, replacing $f$ by $f\circ \phi^{a,\alpha}_\theta$ for a resonable $\theta \in \mathbb R$, we can assume that $b_{01}=\alpha>0$, and thus $f_2(z_2)=\alpha z_2+\sum_{k,j=1}^\infty b_{kj}z_1^kz_2^j$.

Applying Lemma \ref{lemma5}, we conclude that $f_1(z_1,z_2)=z_1+o(z_1)$ and $f_2(z_1,z_2)=z_2+O(z_1 z_2)$. Finally, Lemma \ref{lemma6} ensures that $f=id$, and thus the proof is complete
\end{proof}
\section{Stability groups of radially symmetric hypersurfaces of infinite type}
In this section, we are going to prove Theorem B. To do this, let $M$ be a $\mathcal{C}^\infty$-smooth hypersurface as in Theorem B. That is, $M$ is defined by 
$$
\rho(z_1,z_2)=\mathrm{Re}~z_1+P(z_2)+\mathrm{Im}~z_1 Q(z_2,\mathrm{Im}~z_1)=0,
$$
where $P,Q$ are $\mathcal{C}^\infty$-smooth functions on $\Delta_{\epsilon_0}$ and $\Delta_{\epsilon_0}\times (-\delta_0,\delta_0)~(\epsilon_0,\delta_0>0)$, respectively, satisfying conditions $(i)-(iv)$ as in Theorem B.

 In order to prove Theorem B, we need the following lemma.
\begin{lemma}\label{lemma7}
If $f\in \mathrm{Aut}(M,0)$ satisfying $f_1(z_1,z_2)=\sum_{k=1}^\infty a_k z_1^k$ and $f_2(z_1,z_2)=z_2\sum_{j=0}^\infty b_j z_1^j$, where $a_k,b_j\in \mathbb C~(k,j\in \mathbb N)$, $b_0>0$ and $a_1\ne 0$, then $a_1=b_0=1$
\end{lemma}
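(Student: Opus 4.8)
The plan is to exploit the two coordinate slices $\{z_2=0\}$ and $\{\mathrm{Im}\,z_1=0\}$ of $M$ separately, establishing the two assertions by different mechanisms: the reality of $a_1$ will come from a one–dimensional tangency argument, while the normalizations $b_0=1$ and $a_1=1$ will come from the rigidity of functions vanishing to infinite order (Lemmas \ref{lemma2} and \ref{lemma4}).

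First I would restrict to the slice $\{z_2=0\}$. Since $f_2(z_1,0)=z_2\sum_j b_j z_1^j\big|_{z_2=0}=0$, the map $f$ preserves this slice and acts on it by $z_1\mapsto f_1(z_1)=\sum_{k\geq 1}a_k z_1^k$. Writing $t=\mathrm{Im}\,z_1$, the points of $M$ in this slice form the smooth real curve $\gamma=\{z_1:\mathrm{Re}\,z_1+t\,Q(0,t)=0\}$, which, because $Q(0,0)=0$, is tangent to the imaginary axis at the origin (its tangent vector at $0$ is $i$). The inclusion $f(M)\subseteq M$ forces $f_1(\gamma)\subseteq\gamma$; as $f_1$ is a holomorphic germ with $f_1(0)=0$ and $f_1'(0)=a_1\neq 0$, it is a local biholomorphism of $\mathbb{C}$, so $f_1(\gamma)=\gamma$ near $0$ and its differential at $0$ maps $T_0\gamma$ onto $T_0\gamma$. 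Hence $a_1\cdot i\in\mathbb{R}\,i$, which gives $a_1\in\mathbb{R}$.

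Next I would restrict to the slice $\{t=\mathrm{Im}\,z_1=0\}$, where a point of $M$ is $(z_1,z_2)$ with $z_1=-P(z_2)$ real and negative. Substituting into $\rho\circ f=0$ and using $a_1\in\mathbb{R}$ gives $\mathrm{Re}\,f_1=-a_1 P(z_2)+O(P(z_2)^2)$ and $\mathrm{Im}\,f_1=O(P(z_2)^2)$; since $Q(0,0)=0$ renders the term $\mathrm{Im}\,f_1\cdot Q(f_2,\mathrm{Im}\,f_1)$ negligible, the defining equation collapses to $P(f_2)=a_1 P(z_2)+o(P(z_2))$. As $P(f_2)>0$ for small $z_2\neq 0$, this already forces $a_1\geq 0$, hence $a_1>0$ because $a_1\neq 0$. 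Writing $f_2=b_0 z_2\big(1+\beta(z_2)\big)$ with $\beta(z_2)=O(P(z_2))$, I would invoke Lemma \ref{lemma4} together with the fact that $P(|z_2|)\,p'(|z_2|)=P'(|z_2|)$ vanishes to infinite order (Corollary \ref{corollary1}) to show that the perturbation $\beta$ does not affect the limit, so that $\lim_{z_2\to 0}P(b_0 z_2)/P(z_2)=\lim_{z_2\to 0}P(f_2)/P(z_2)=a_1>0$. Finally, Lemma \ref{lemma2}, applied with dilation factor $b_0>0$ and limit value $a_1>0$, yields $b_0=a_1=1$.

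The step I expect to be the main obstacle is the reduction $\lim P(f_2)/P(z_2)=\lim P(b_0 z_2)/P(z_2)$, that is, separating the genuine dilation $b_0$ from the higher–order perturbation $z_2\,O(P(z_2))$ inside $f_2$. The difficulty is that $b_0$ is not a priori close to $1$, so the perturbation must be compared across two different scales ($b_0 z_2$ versus $z_2$), and $p'$ may blow up as $z_2\to 0$. The point is that the infinite–order vanishing of $P'$ forces $r\,P'(b_0 r)\to 0$, which is exactly the control furnished by Lemma \ref{lemma4} and Corollary \ref{corollary1}; this is the same mechanism that drives the proof of Lemma \ref{lemma5}.
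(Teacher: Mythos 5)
Your proposal is correct and follows essentially the same route as the paper: evaluate the identity $\rho\circ f=0$ on the slice $\mathrm{Im}\,z_1=0$ to get $P\big(b_0z_2+z_2O(P(z_2))\big)=a_1P(z_2)+o(P(z_2))$, strip the perturbation via Lemma \ref{lemma4} and Corollary \ref{corollary1}, and conclude with Lemma \ref{lemma2}. The only (cosmetic) difference is that you derive $a_1\in\mathbb{R}$ by a tangency argument on the invariant slice $\{z_2=0\}$, whereas the paper reads off $\mathrm{Re}(a_1it)+o(t)=0$ directly from the same substitution.
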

\begin{proof}
Since $M$ is invariant under $f$, we have
\begin{equation}\label{qtt1}
\begin{split}
&\mathrm{Re}\Big(\sum_{k=1}^\infty a_k\big(it-P(z_2)-t Q(z_2,t)\big)^k\Big)\\
&+P\Big(z_2 \sum_{j=0}^\infty b_{j}\big(it-P(z_2)-t Q(z_2,t)\big)^j\Big)\\
&+\mathrm{Im}\Big(\sum_{k=1}^\infty a_k(it-P(z_2)-t Q(z_2,t))^k\Big)\\
&\times Q\Big(z_2 \sum_{j=0}^\infty b_{j}(it-P(z_2)-t Q(z_2,t))^j,\\
&\qquad \quad \mathrm{Im}\big(\sum_{k=1}^\infty a_k(it-P(z_2)-t Q(z_2,t))^k\big)\Big)\equiv 0
\end{split}
\end{equation}
on $\Delta_{\epsilon_0}\times (-\delta_0,\delta_0)$. It follows from (\ref{qtt1}) with $z_2=0$ that 
$$
\mathrm{Re}(a_1 it)+o(t)= 0
$$
for every $t\in\mathbb R$ small enough. This yields that $\mathrm{Im}(a_1)=0$.

On the other hand, inserting $t=0$ into (\ref{qtt1}) one has 
\begin{equation}\label{qtt2}
\begin{split}
P\Big(b_0z_2+z_2O(P(z_2)) \Big)-\mathrm{Re}(a_1)P(z_2)+o(P(z_2))\equiv 0
\end{split}
\end{equation}
on $\Delta_{\epsilon_0}$. This implies that $\lim_{z_2\to 0} P\big(b_0 z_2+z_2O(P(z_2))\big)/P(z_2)=\mathrm{Re}(a_1)=a_1> 0$. 

By assumption, we can write $P(z_2)=e^{p(|z_2|)}$ for all $z_2\in \Delta^*_{\epsilon_0}$ for some function $p\in\mathcal{C}^\infty(0,\epsilon_0) $ with $\lim_{t\to 0^+}p(t)=-\infty$ such that $P$ vanishes to infinite order at $z_2=0$. Therefore, by Lemma \ref{lemma4} and the fact that $P(z_2)p'(|z_2|)$ vanishes to infinite order at $z_2=0$ (cf. Corollary \ref{corollary1}), one gets that
 $$
  \lim_{z_2\to 0}\frac{P(b_0 z_2)}{P(z_2)}=\lim_{z_2\to 0} \frac{P\big(b_0 z_2+z_2O(P(z_2))\big)}{P(z_2)}=a_1> 0.
 $$
Hence, Lemma \ref{lemma2} ensures that $a_1=b_0=1$, which ends the proof.
\end{proof}

\begin{proof}[Proof of Theorem B]
For $f=(f_1,f_2)\in \mathrm{Aut}(M,0)$. We define $F_t$ by setting $F_t:=f\circ R_{-t}\circ f^{-1}$ for each $t\in \mathbb R$. Then $\{F_t\}_{t\in \mathbb R}$ is a one-parameter subgroup of $\mathrm{Aut}(M,0)$. 

Using the same arguments as in the proof of Theorem A, Theorem \ref{T2} yields that $F_t=R_{-t}$ for all $t\in \mathbb R$. This implies that
\begin{equation}\label{e1}
f= R_{-t}\circ f\circ R_{t},~\forall t\in \mathbb R,
\end{equation}
namely
\begin{equation*}
\begin{cases}
f_1(z_1,z_2)&\equiv f_1(z_1,z_2 e^{it})\\
f_2(z_1,z_2)&\equiv e^{-it} f_2(z_1,z_2 e^{it})
\end{cases}
\end{equation*}
on a neighborhood $U$ of $(0,0)$ in $\mathbb C^2$ for all $t\in \mathbb R$. Indeed, this tells us that $f_1(z_1,z_2)=\sum_{k=1}^\infty a_k z_1^k$ and $f_2(z_1,z_2)=z_2\sum_{j=0}^\infty b_j z_1^j$ for all $(z_1,z_2)\in U$, where $a_k, b_j\in \mathbb C$ for all $j\in \mathbb N$ and $k\in \mathbb N^*$. We note that $a_1, b_0\in \mathbb C^* $. In addition, replacing $f$ by $f\circ R_\theta$ for some $\theta\in \mathbb R$, we can assume that $b_0$ is a positive real number. 

We now apply Lemma \ref{lemma7} to obtain that $a_1=b_0=1$. Finally, by  Lemma \ref{lemma6} we conclude that $f=id$. ( Lemma \ref{lemma6} still holds for a $\mathcal{C}^\infty$-smooth radially symmetric hypersurface satisfying $(i)-(iv)$.) Hence, the proof is complete.
\end{proof}
\section{Appendix}
\begin{theorem} \label{T3} Let $p_0\in M(a,\alpha,p,q)$. Then any flow of the holomorphic vector field
$$
H^{a,\alpha}(z_1,z_2)=L^\alpha(z_1) a(z_2)\frac{\partial }{\partial z_1}+i z_2\frac{\partial }{\partial z_2},
$$
where 
\begin{equation*}
L^\alpha(z_1)=
\begin{cases}
\frac{1}{\alpha}\big(\exp(\alpha z_1)-1\big)&\text{if}~ \alpha \ne 0\\
z_1 &\text{if}~ \alpha =0,
\end{cases}
\end{equation*}
starting from $p_o$ is contained in $M(a,\alpha,p,q)$.
\end{theorem}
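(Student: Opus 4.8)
The plan is to reduce the statement to two facts: that $\phi_t^{a,\alpha}$ is exactly the flow of the real vector field $\mathrm{Re}\,H^{a,\alpha}$, and that $\mathrm{Re}\,H^{a,\alpha}$ is tangent to $M(a,\alpha,p,q)$; the conclusion then follows from uniqueness of solutions of ordinary differential equations.

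First I would verify that, for a fixed $p_0=(z_1,z_2)\in M(a,\alpha,p,q)$, the curve $\gamma(t):=\phi_t^{a,\alpha}(p_0)$ is the integral curve of $H^{a,\alpha}$ through $p_0$; that is, $\gamma(0)=p_0$ and
\begin{equation*}
\dot\gamma_1(t)=L^\alpha(\gamma_1(t))\, a(\gamma_2(t)),\qquad \dot\gamma_2(t)=i\,\gamma_2(t).
\end{equation*}
The second identity is immediate from $\gamma_2(t)=z_2e^{it}$. For the first, writing $A(t)=\exp\big(\int_0^t a(z_2e^{i\tau})\,d\tau\big)$ one has $\dot A=a(\gamma_2)A$, and differentiating the explicit first component (using $e^{-\alpha\gamma_1}=1+(e^{-\alpha z_1}-1)A$ when $\alpha\ne0$, respectively $\gamma_1=z_1A$ when $\alpha=0$) gives $\dot\gamma_1=L^\alpha(\gamma_1)a(\gamma_2)$ after a short computation.

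Next I would record the tangency of $\mathrm{Re}\,H^{a,\alpha}$ to $M$. Since $H^{a,\alpha}$ generates $\mathrm{hol}_0\big(M(a,\alpha,p,q),0\big)$ by Theorem \ref{T1}, by the very definition of $\mathrm{hol}_0$ the field $\mathrm{Re}\,H^{a,\alpha}$ is tangent to $M$; equivalently, with $\rho$ the given defining function there is a $\mathcal{C}^\infty$-smooth real function $\lambda$ on a neighborhood of $M$ with
\begin{equation*}
2\,\mathrm{Re}\big(H^{a,\alpha}\rho\big)=\lambda\,\rho .
\end{equation*}
Then, setting $r(t):=\rho(\gamma(t))$ and using that $\gamma$ solves the holomorphic system above, the chain rule (with $\partial_{\bar z_j}\rho=\overline{\partial_{z_j}\rho}$, as $\rho$ is real) yields
\begin{equation*}
\dot r(t)=2\,\mathrm{Re}\big(H^{a,\alpha}\rho\big)(\gamma(t))=\lambda(\gamma(t))\,r(t).
\end{equation*}
Thus $r$ solves a linear homogeneous ODE with $r(0)=\rho(p_0)=0$ (because $p_0\in M$), so uniqueness forces $r\equiv0$ on the interval where the flow is defined. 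Hence $\gamma(t)\in M(a,\alpha,p,q)$ for all such $t$, which is the assertion.

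The main obstacle is the tangency identity $2\,\mathrm{Re}(H^{a,\alpha}\rho)=\lambda\rho$. If one prefers not to quote it from Theorem \ref{T1}, then establishing it directly is the one genuinely computational point: it requires differentiating the piecewise-defined $F$ and $P$ and exploiting the relations $R(z_2)=q(|z_2|)-\mathrm{Re}\,S(z_2)$ and $S'(z_2)=a(z_2)/z_2$, where $S(z_2)=\sum_{n\ge1}\frac{a_n}{n}z_2^n$, to produce the cancellations forced by the $\log\big|\cos(R+\alpha t)\big|$ structure of $F$ and the matching exponential factor in $P$. Everything else — the identification of $\phi_t^{a,\alpha}$ as the flow and the passage from tangency to invariance via ODE uniqueness — is routine.
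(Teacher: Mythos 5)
Your proof is correct, and it reaches the conclusion by the same final mechanism as the paper --- setting $r(t)=\rho(\gamma(t))$, showing $\dot r$ vanishes whenever $r$ does, and invoking uniqueness for ODEs with $r(0)=0$ --- but it justifies the key factorization differently. The paper does not quote the tangency abstractly: it substitutes the explicit identities (i)--(v) from \cite[Appendix A]{Ninh} for $P$, $P_1$, $F$ and $Q_0$ into $g'(t)=2\,\mathrm{Re}[\rho_{z_1}\dot z_1+\rho_{z_2}\dot z_2]$ and computes the multiplier by hand, obtaining $g'=2g\,\mathrm{Re}[(\tfrac12+\tfrac{Q_0}{2i})a]$ when $\alpha=0$ and the (nonlinear, but still uniquely solvable) equation $g'=2\,\tfrac{e^{\alpha g}-1}{\alpha}e^{-\alpha P}\,\mathrm{Re}[(\tfrac12+\tfrac{Q_0}{2i})a]$ when $\alpha\ne 0$. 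You instead extract the tangency $\mathrm{Re}(H^{a,\alpha}\rho)|_M=0$ from Theorem \ref{T1} (legitimate, since membership in $\mathrm{hol}_0$ is defined exactly by that tangency) and upgrade it to $2\,\mathrm{Re}(H^{a,\alpha}\rho)=\lambda\rho$ by smooth division; that division step, which you pass over quickly, does need the observation that $\partial\rho/\partial(\mathrm{Re}\,z_1)\equiv 1$ so that Hadamard's lemma applies. Your route is shorter and more conceptual, and your identification of $\phi^{a,\alpha}_t$ as the integral flow is a correct bonus the paper states without proof; the trade-off is that you defer all the computational content to Theorem \ref{T1} (itself proved in \cite{Ninh}), whereas the paper's Appendix is evidently meant to exhibit the invariance concretely from the identities (i)--(v), making the multiplier explicit rather than merely existent.
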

\begin{proof}
Let $P_1, P, R, F$ be functions and $\epsilon_0>0,\delta_0>0$ be positive real numbers introduced to define $M(a,\alpha,p,q)$ and let $Q_0(z_2):=\tan (R(z_2))$ for all $z_2 \in \Delta_{\epsilon_0}$. Then by Lemma $7$, Lemma $8$, and Corollary $9$ in \cite[Appendix A]{Ninh} we have the following equations.
\begin{align*}
&(i)\;\mathrm{Re}\Big[i z_2 {Q_0}_{z_2}(z_2)+\frac{1}{2}\Big(1+Q_0^2(z_2)\Big)i a(z_2)\Big]\equiv 0;\\
&(ii)\; \mathrm{Re}\Big[i z_2 {P_1}_{z_2}(z_2)-\Big(\frac{1}{2}+\frac{Q_0(z_2)}{2i}\Big)a(z_2)P_1(z_2)\Big]\equiv 0;\\
&(iii)\;\mathrm{Re}\Big[i z_2 {P}_{z_2}(z_2)+\frac{\exp\big(-\alpha P(z_2)\big)-1}{\alpha}\left(\frac{1}{2}+\frac{Q_0(z_2)}{2i}\right) a(z_2)\Big]\equiv 0\quad\text{for}\;\alpha\ne 0;\\
&(iv)\;\Big(i+F_t(z_2,t\Big)\exp\Big(\alpha\big(i t-F(z_2,t)\big)\Big)\equiv i+Q_0(z_2);\\
&(v)\;\mathrm{Re}\Big[2i\alpha  z_2 F_{z_2}(z_2,t)+\Big(F_t(z_2,t)-Q_0(z_2)\Big) ia(z_2)\Big]\equiv 0\\
\end{align*}
on $\Delta_{\epsilon_0}$ for any $t\in (-\delta_0,\delta_0)$.

Let $z(t)=(z_1(t),z_2(t)), -\infty<t<+\infty$, be the flow of $H$ satisfying $z(0)=p_0$. This means that 
\begin{equation*}
\begin{cases} 
z_1'(t)=L(z_1(t))a(z_2(t))\\
z_2'(t)=i z_2(t)
\end{cases}
\end{equation*}
for all $t\in\mathbb R$.

Let $g(t):=\rho(z_1(t),z_2(t)), -\infty<t<+\infty $. Then 
$$
g'(t)=2\mathrm{Re}\Big [ \rho_{z_1}(z(t))z_1'(t)+\rho_{z_2}(z(t))z_2'(t)\Big],~ \forall t\in \mathbb R.
$$

 We devide the proof into two cases.
 
\noindent 
 {\bf a) $\alpha =0$.} In this case, $F(z_2,\tau)=Q_0(z_2)\tau$ for all $(z_2,\tau)\in \Delta_{\epsilon_0}\times (-\delta_0,\delta_0)$. Therefore, by $\mathrm{(i)}$ and $\mathrm{(ii)}$ one obtains that
\begin{equation*}
\begin{split}
g'(t)&=2\mathrm{Re}\Big[\Big(\frac{1}{2}+\frac{Q_0(z_2(t))}{2i}\Big) z_1(t)a(z_2(t))\\
&\quad +\Big({P_1}_{z_2}(z_2(t))+(\mathrm{Im}~z_1(t)){Q_0}_{z_2}(z_2(t))\Big) i\beta z_2(t)\Big]\\
&= 2\mathrm{Re}\Big[\Big(\frac{1}{2}+\frac{Q_0(z_2(t))}{2i}\Big) \Big(i (\mathrm{Im}~z_1(t))+g(t)-P_1(z_2(t))\\
&\quad -(\mathrm{Im}~z_1) Q_0(z_2(t))\Big)a(z_2(t))+\Big({P_1}_{z_2}(z_2(t))+(\mathrm{Im}~z_1){Q_0}_{z_2}(z_2(t))\Big) i z_2(t)\Big]\\
&=2\mathrm{Re}\Big[i z_2(t) {P_1}_{z_2}(z_2(t)) -\Big(\frac{1}{2}+\frac{Q_0(z_2(t))}{2i}\Big)a(z_2(t)) P_1(z_2(t))\Big]\\
&\quad +(\mathrm{Im}~z_1(t))\mathrm{Re}\Big[i  z_2(t) {Q_0}_{z_2}(z_2(t)) +\frac{1}{2}\Big(1+Q_0(z_2(t))^2\Big)ia(z_2(t))\Big]\\
&\quad +  2g(t)\mathrm{Re}\Big[\Big(\frac{1}{2}+\frac{Q_0(z_2(t))}{2i}\Big) a(z_2(t))\Big ]\\
&=  2g(t)\mathrm{Re}\Big[\Big(\frac{1}{2}+\frac{Q_0(z_2(t))}{2i}\Big) a(z_2(t))\Big ]
\end{split}
\end{equation*}
 for every $t\in \mathbb R$. Since $g(0)=\rho(p_0)=0$, by the uniqueness of the solution of differential equations, we conclude that $g(t)\equiv 0$. This proves the theorem for $\alpha =0$.
  
\noindent
{\bf b) $\alpha\ne 0$.} It follows from $\mathrm{(iii)}$, $\mathrm{(iv)}$, and $\mathrm{(v)}$ that
\begin{equation*}
\begin{split}
&g'(t)=2\mathrm{Re}\Big[\Big(\frac{1}{2}+\frac{F_{\tau}(z_2(t),\mathrm{Im}~z_1(t))}{2i}\Big)L(z_1(t))a(z_2(t))\\
&\quad +\Big(P_{z_2}(z_2(t))+F_{z_2}(z_2(t),\mathrm{Im}~z_1(t))\Big) i z_2(t)\Big]\\
&=2\mathrm{Re}\Big[\Big(\frac{1}{2}+\frac{F_{\tau}(z_2(t),\mathrm{Im}~z_1(t))}{2i}\Big)\frac{1}{\alpha}\Big(\exp\Big(\alpha\big(i\mathrm{Im}~z_1(t)+g(t)-P(z_2(t))\\
&\quad -F(z_2(t),\mathrm{Im}~z_1(t))\big)\Big)-1 \Big)a(z_2(t))+\Big(P_{z_2}(z_2(t))+F_{z_2}(z_2(t),\mathrm{Im}~z_1(t))\Big) i z_2(t)\Big]\\
&=2\mathrm{Re}\Big[\frac{1}{\alpha}\frac{i+F_{\tau}(z_2(t),\mathrm{Im}~z_1(t))}{2i}\exp\Big(\alpha\big(i\mathrm{Im}~z_1(t)-
F(z_2(t),\mathrm{Im}~z_1(t))\big)\Big)\\
&\quad \times \exp(-\alpha P(z_2(t)) )\exp(\alpha g(t)) a(z_2(t))  - \frac{1}{\alpha}\Big(\frac{1}{2}+\frac{F_{\tau}(z_2(t),\mathrm{Im}~z_1(t))}{2i}\Big)a(z_2(t)) \\
&\quad +\Big(P_{z_2}(z_2(t))+F_{z_2}(z_2(t),\mathrm{Im}~z_1(t))\Big) i z_2(t)\Big]\\
&=2\mathrm{Re}\Big[\frac{1}{\alpha}\frac{i+Q_0(z_2(t))}{2i}\exp(-\alpha P(z_2(t)) )\exp(\alpha g(t))a(z_2(t))\\
&\quad - \frac{1}{\alpha}\Big(\frac{1}{2}+\frac{F_{\tau}(z_2(t),\mathrm{Im}~z_1(t))}{2i}\Big)a(z_2(t)) +\Big(P_{z_2}(z_2(t))+F_{z_2}(z_2(t),\mathrm{Im}~z_1(t))\Big) i z_2(t)\Big]\\
&=2\mathrm{Re}\Big[i z_2(t)P_{z_2}(z_2(t))+\Big(\frac{1}{2}+\frac{Q_0(z_2(t))}{2i}\Big)\frac {\exp(-\alpha P(z_2(t)))-1}{\alpha}a(z_2(t)) \Big]\\
&\quad+2 \mathrm{Re}\Big[ i z_2(t) F_{z_2}(z_2(t),\mathrm{Im}~z_1(t))+ \frac{1}{2 \alpha}\Big(F_{\tau}(z_2(t),\mathrm{Im}~z_1(t))-Q_0(z_2(t))\Big)ia(z_2(t))\Big]\\
&\quad + 2\frac{\exp(\alpha g(t))-1}{\alpha} \exp(-\alpha P(z_2(t))) \mathrm{Re}\Big[\Big(\frac{1}{2}+\frac{Q_0(z_2(t))}{2i}\Big)a(z_2(t)) \Big] \\
&=2\frac{\exp(\alpha g(t))-1}{\alpha} \exp(-\alpha P(z_2(t))) \mathrm{Re}\Big[\Big(\frac{1}{2}+\frac{Q_0(z_2(t))}{2i}\Big)a(z_2(t)) \Big]
\end{split}
\end{equation*}
 for every $t\in \mathbb R$. Since $g(0)=\rho(p_0)=0$, again by the uniqueness of the solution of differential equations, we conclude that $g(t)\equiv 0$. This ends the proof. 
\end{proof}

\end{document}